%
\documentclass[runningheads]{llncs}
\usepackage{graphicx}
\usepackage[symbol]{footmisc}

\usepackage{float}
\usepackage{mathtools}
\usepackage{amssymb, amsmath, latexsym}
\usepackage{nicefrac}
\usepackage{hhline}
\usepackage{makecell}
\usepackage{caption}
\usepackage{multirow}

\usepackage{colortbl}
\definecolor{bgcolor}{rgb}{0.8,1,1}
\definecolor{bgcolor2}{rgb}{0.8,1,0.8}
\usepackage{threeparttable}

\usepackage{pifont}
\usepackage[colorinlistoftodos,bordercolor=orange,backgroundcolor=orange!20,linecolor=orange,textsize=scriptsize]{todonotes}
\usepackage{algorithm}\usepackage{algpseudocode}

\allowdisplaybreaks
\graphicspath{ {images/} }
%

\def \R {\mathbb R}

\newtheorem{assumption}{Assumption}
  
\def\R{\mathbb{R}}

\def\R{\mathbb R}

\def\w{w}
\def\z{z}

\newcommand{\E}[1]{\mathbb{E}\left[#1\right]}
\newcommand{\Ek}[1]{\mathbb{E}_k\left[#1\right]}

\newcommand{\eqdef}{\vcentcolon=}

\newcommand{\norm}[1]{\left\| #1 \right\|}

\newcommand{\EndProof}{\begin{flushright}$\square$\end{flushright}}
\DeclareMathOperator{\dom}{\mathrm{dom}}

\newcommand{\F}{F}

\def\<#1,#2>{\langle #1,#2\rangle}
\newcommand{\sqn}[1]{\norm{#1}^2}

\begin{document} 
\title{Compression and Data Similarity: Combination of Two Techniques for Communication-Efficient Solving of Distributed Variational Inequalities\thanks{The work of A. Beznosikov was supported by the strategic academic leadership program 'Priority 2030' (Agreement  075-02-2021-1316 30.09.2021).
The work of A. Gasnikov was supported by the Ministry of Science and Higher Education of the Russian Federation (Goszadaniye), No. 075-00337-20-03, project No. 0714-2020-0005.}}
\titlerunning{Compression and Data Similarity for Distributed Variational Inequalities}
%
\author{Aleksandr Beznosikov\inst{1,2}
Alexander Gasnikov\inst{1,3,4}
}
\authorrunning{A. Beznosikov, A. Gasnikov}
%
\institute{Moscow Institute of Physics and Technology, Dolgoprudny, Russia \and
HSE University, Moscow, Russia \and
IITP RAS, Moscow, Russia \and
Caucasus Mathematical Center, Adyghe State University, Maikop, Russia
}
\maketitle              
\begin{abstract}

Variational inequalities are an important tool, which includes minimization, saddles, games, fixed-point problems. Modern large-scale and computationally expensive practical applications make distributed methods for solving these problems popular. Meanwhile, most distributed systems have a basic problem -- a communication bottleneck. There are various techniques to deal with it. In particular, in this paper we consider a combination of two popular approaches: compression and data similarity. We show that this synergy can be more effective than each of the approaches separately in solving distributed smooth strongly monotone variational inequalities. Experiments confirm the theoretical conclusions.


\keywords{distributed optimization \and variational inequalities \and compression \and data similarity}
\end{abstract}
\section{Introduction}

Variational inequalities are a broad class of problems that have been widely studied for a long time. This is primarily due to the uniqueness of variational inequalities; they can describe various types of optimization problems, which, in turn, have many practical applications \cite{VIbook2003,Heinz}.  We can mention classic examples in economics  and game theory \cite{facchinei2007finite}, robust optimization \cite{BenTal2009:book}, non-smooth optimization \cite{nesterov2005smooth,nemirovski2004prox}, matrix factorization \cite{bach2008convex}, image denoising \cite{esser2010general,chambolle2011first}, supervised learning \cite{bach2011optimization}. In recent years, there has been a significant increase in research interest toward the study of variational inequalities due to new connections with GANs \cite{goodfellow2014generative}. In particular, the authors of \cite{daskalakis2017training,gidel2018variational,mertikopoulos2018optimistic,chavdarova2019reducing,pmlr-v89-liang19b,peng2020training} show that even if one considers the classical (in the variational inequalities literature) regime involving monotone and strongly monotone inequalities, it is possible to obtain insights, methods and recommendations useful for the GANs training.

Until recently, theoretical studies of methods for variational inequalities were carried out only in the non-distributed setting. The \texttt{Extra Gradient} / \texttt{Mirror Prox} method \cite{Korpelevich1976TheEM,Nemirovski2004,juditsky2008solving} became widely known and very popular. This alorithm for variational inequalities is key and basic (as Gradient Descent for minimization problems). But new practical problems have opened up new challenges. Indeed, the training of modern supervised machine learning models in general, and deep neural networks in particular, becomes more and more demanding. Solving such problems is almost impossible without a distributed approach with parallelization \cite{DMLsurvey}.

Meanwhile, the distributed approach has its bottlenecks. The main one is communication cost, as the transfer of information between computing devices takes considerably longer than local processes. This is why it is important not just to get a distributed version of e.g. \texttt{Extra Gradient} \cite{beznosikov2020local}, but a more effective method in terms of communication. The community already knows a number of approaches to 
communication efficient distributed optimization \cite{FEDLEARN,cocoa-2018-JMLR,Ghosh2020,MARINA}. For example, two such popular approaches are the compression of transmitted information, and the use of statistical similarity of local data on workers (if we spread the data uniformly among them).

\paragraph{}\hspace{-0.2cm}\textbf{Our contribution and related works.}
In this work we have combined two techniques for effective communications: compression and data similarity. Through this synthesis, we have obtained a method for distributed variational inequalities with better theoretical guarantees on the number of information transferred. See Table \ref{tab:comparison0}.

\renewcommand{\arraystretch}{2}
\begin{table}
    \centering
    \scriptsize
	\captionof{table}{Summary of complexities on the number of transmitted information for different approaches to communication bottleneck. \\
	{\em Notation:} $\mu$ = constant of strong monotonicity of the operator $F$, $L$ = Lipschitz constant of the operator $F$,  $\delta$ = similarity (relatedness) constant (Assumption \ref{as:delta}), $M$ = number of devices, $b$ = local data size, $\varepsilon$ = precision of the solution.
	}
    \label{tab:comparison0}
    \small
  \begin{threeparttable}
  \resizebox{\linewidth}{!}{
    \begin{tabular}{|c|c|c|c|c|}
    \hline
    \textbf{Method} & \textbf{Reference} & \textbf{Technique} & \textbf{Amount of information} & \textbf{If $\delta \sim \tfrac{L}{\sqrt{b}}$} \\
    \hline
    \texttt{Extra Gradient} & \cite{juditsky2008solving,beznosikov2020local} & & $O\left( \frac{L}{\mu}\log \frac{1}{\varepsilon} \right)$  &  $O\left( \frac{L}{\mu}\log \frac{1}{\varepsilon} \right)$ 
    \\ \hline
    \texttt{SMMDS} & \cite{beznosikov2021distributed} & similarity & $O\left( \frac{\delta}{\mu}\log \frac{1}{\varepsilon} \right)$  &  $O\left( \frac{1}{\sqrt{b}} \cdot \frac{L}{\mu}\log \frac{1}{\varepsilon} \right)$ 
    \\ \hline
   \texttt{MASHA} & \cite{beznosikov2021distributedcompr} & compression & $O\left( \frac{L}{\sqrt{M} \mu}\log \frac{1}{\varepsilon} \right)$  &  $O\left(\frac{1}{\sqrt{M}} \cdot \frac{L}{\mu}\log \frac{1}{\varepsilon} \right)$ 
    \\ \hline
    \texttt{Optimistic MASHA} & This work & 
    \makecell{compression \\ similarity}
     & $O\left( \left[\frac{L}{M \mu} + \frac{\delta}{ \sqrt{M}\mu}\right] \log \frac{1}{\varepsilon} \right)$ & $O\left( \left[\frac{1}{M} + \frac{1}{ \sqrt{Mb}}\right] \cdot \frac{L}{\mu} \log \frac{1}{\varepsilon} \right)$
    \\ \hline
    \end{tabular}
    }
    \end{threeparttable}
\end{table}

Separately from each other, similarity and compression techniques have already been investigated for both  particular minimization problems and general variational inequalities.

Different approaches with compression have been developed for minimization problems. Here we can  highlight the earliest and the simplest approach, in which compression operators were applied to \texttt{SGD}-type methods \cite{alistarh2017qsgd}. Further modifications with "memory" were presented in \cite{DIANA,horvath2019stochastic}. Then accelerated methods were introduced by authors of \cite{li2020acceleration}. The work \cite{MARINA}  was tried to look at compression through the variance reduction technique. There is now widespread research into practical modifications with biased operators and error compensation \cite{error_feedback,zheng2019communication,beznosikov2020biased,EF21}, bidirectional compression \cite{tang2019doublesqueeze,philippenko2020bidirectional}, partial participation \cite{horvath2020better,philippenko2020bidirectional,MARINA}, etc.
In the generality  of variational inequalities, compression methods were studied in  \cite{beznosikov2021distributedcompr}. One can note that our new method are ahead of \texttt{MASHA} from this paper (record results in terms of compression methods for variational inequalities at the moment). 

The literature on distributed minimization problems under similarity (relatedness) assumption is also vast. 
The paper \cite{arjevani2015communication} established lower communication complexity bounds. The authors of \cite{pmlr-v32-shamir14} proposed  the  mirror-descent based algorithm with data preconditioning. This technique was further accelerated by the inexact
damped Newton method \cite{pmlr-v37-zhangb15}, the Catalyst framework \cite{reddi2016aide} and the heavy ball momentum \cite{yuan2019convergence}. Higher order methods employing preconditioning were studied in   \cite{daneshmand2021newton,agafonov2021accelerated,tian2021acceleration}. Not for minimizations, but for variational inequalities and saddles, the similarity (relatedness) setup was considered by \cite{beznosikov2021distributed,kovalev2022optimal}.  In some cases, our estimates can also outperform the results from these works.



\section{Problem setup and assumptions}

\subsection{Variational inequality}

We consider variational inequalities (VI) of the  form
\begin{equation}\begin{aligned}
    \label{eq:VI}
    \text{Find} ~~ z^* &\in \R^d ~~ \text{such that} ~~ 
    \langle F(z^*), z - z^* \rangle + g(z) - g(z^*) \geq 0, ~~ \forall z \in \R^d,
\end{aligned}\end{equation}
where $F: \R^d \to \R^d $ is an operator, and $g: \R^d \to \R \cup \{ + \infty\}$ is a proper lower semicontinuous convex function. We assume that $F$ is distributed across $M$ workers/devices:
\begin{equation}
    \label{MK}
  F(z) \eqdef \frac{1}{M}\sum\limits_{m=1}^M F_m(z),
\end{equation}
where $F_m:\R^d \to \R^d$ for all $m \in \{1,2,\dots,M\}$.

\subsection{Examples}

To showcase the expressive power of the formalism \eqref{eq:VI}, we now give a  few examples of variational inequalities arising in machine learning.

\textbf{Example 1 [Convex minimization].} Consider the composite minimization problem:
\begin{align}
\label{eq:min}
\min_{z \in \R^d} f(z) + g(z),
\end{align}
where $f$ is typically a main term, and $g$ is a regularizer or an indicator function (e.g., if we want to consider the problem on some set). 
If we put $F(z) \eqdef \nabla f(z)$, then it can be proved that $z^* \in \dom g$ is a solution for \eqref{eq:VI} if and only if $z^* \in \dom g$ is a solution for \eqref{eq:min}. 


\textbf{Example 2 [Convex-concave saddle point problems].} Consider the convex-concave saddle point problem
\begin{align}
\label{eq:minmax}
\min_{x \in \R^{d_x}} \min_{y \in \R^{d_y}} f(x,y) + g_1 (x) + g_2(y),
\end{align}
where $g_1$ and $g_2$ can also be interpreted as regularizers or indicators. 
If we put $F(z) \eqdef F(x,y) = [\nabla_x f(x,y), -\nabla_y f(x,y)]$ and $g(z) = g(x,y) = g_1 (x) + g_2(y)$, then it can be proved that $z^* \in \dom g$ is a solution for \eqref{eq:VI} if and only if $z^* \in \dom g$ is a solution for \eqref{eq:minmax}. 

While minimization problems are widely investigated separately from variational inequalities, saddles are very often studied together with variational inequalities. In particular, lower bounds for the former are also valid for the latter. Moreover, upper bounds for variational inequalities are valid for saddle point problems. However, perhaps more importantly, these lower and upper bounds coincide. This is in contrast to minimization, where the lower bounds are weaker. 

\subsection{Assumptions}

\begin{assumption}[Lipschitzness] \label{as:Lipsh}
The operator $F$ is $L$-Lipschitz continuous, i.e. for all $u, v \in \R^d$ we have
\begin{equation}
\label{eq:Lipsh}
\| F(u)-F(v) \|  \leq L\|u - v\|.
\end{equation}
\end{assumption}
For problems \eqref{eq:min} and \eqref{eq:minmax},  $L$-Lipschitzness of the operator means that the functions $f(z)$ and $f(x,y)$ are $L$-smooth.

\begin{assumption}[Strong monotonicity]\label{as:strmon}
The operator $F$ is $\mu$-strongly monotone, i.e. for all $u, v \in \R^d$ we have
\begin{equation}
\label{eq:strmon}
\langle F(u) - F(v); u - v \rangle \geq \mu \| u-v\|^2.
\end{equation}
\end{assumption}
For problems \eqref{eq:min} and \eqref{eq:minmax}, strong monotonicity of $F$ means strong convexity of $f(z)$ and strong convexity-strong concavity of $f(x,y)$.

\begin{assumption}[$\delta$-relatedness]\label{as:delta}
Each operator $F_m$ is $\delta$-related. It means that each operator $F_m - F$ is $\delta$-Lipschitz continuous, i.e. for all $u, v \in \R^d$ we have
\begin{equation}
\label{eq:delta}
\| F_m(u) - F(u)- F_m(v) + F(v)\|  \leq \delta\|u - v\|.
\end{equation}
\end{assumption}
While Assumptions \ref{as:Lipsh} and \ref{as:strmon} are basic and widely known, Assumption \ref{as:delta} requires further comments. This assumption goes back to the conditions of data similarity.
In more detail, we consider distributed minimization \eqref{eq:min} and saddle point \eqref{eq:minmax} problems:
\begin{equation*}
f(z) = \frac{1}{M} \sum\limits_{m=1}^M f_m(z), \quad f(x,y) = \frac{1}{M} \sum\limits_{m=1}^M f_m(x,y),
\end{equation*}
and assume that for minimization local and global hessians are \textit{$\delta$-similar} \cite{pmlr-v32-shamir14,pmlr-v37-zhangb15,yuan2019convergence,hendrikx2020statistically,kovalev2022optimal}:
\begin{equation*}
\| \nabla^2 f(z) - \nabla^2 f_m(z)\| \leq \delta,
\end{equation*}
and for saddles second derivatives are differ by $\delta$ \cite{beznosikov2021distributed,kovalev2022optimal}:
\begin{align*}
\| \nabla^2_{xx} f(x,y) - \nabla^2_{xx} f_m(x,y)\| \leq \delta, \\
\| \nabla^2_{xy} f(x,y) - \nabla^2_{xy} f_m(x,y)\| \leq \delta, \\
\| \nabla^2_{yy} f(x,y) - \nabla^2_{yy} f_m(x,y)\| \leq \delta.
\end{align*}
It turns out that if we look at machine learning and the data is u distributed between devices, it can be proven \cite{tropp2015introduction,hendrikx2020statistically} that $\delta = \mathcal{\tilde O}\left( \tfrac{L}{\sqrt{b}}\right)$, where $b$ is the number of local data points on each of the workers.

\section{Main part}

Our new algorithm \texttt{Optimistic MASHA}, as well as \texttt{MASHA} from \cite{beznosikov2021distributedcompr}(the only compressed algorithm for variational inequalities already presented in the community), is based on the ideas of negative momentum and variance reduction technique \cite{alacaoglu2021stochastic,kovalev2022optimalvi}.

\begin{algorithm}[h]
	\caption{\texttt{Optimistic MASHA}}
	\label{alg:optmasha}
	\begin{algorithmic}[1]
\State
\noindent {\bf Parameters:}  Stepsize $\gamma>0$, parameter $\tau$, number of iterations $K$.\\
\noindent {\bf Initialization:} Choose  $z^0 = w^0 \in \mathcal{Z}$. \\
Server  sends to devices $z^0 = w^0 = w^{-1}$ and devices compute $F_m(z^0)$ and send to server and get $F(z^0)$
\For {$k=0,1, 2, \ldots, K-1$ }
\For {each device $m$ in parallel}
\State  Compute $F_m(z^{k})$
\State $\delta^k_m = F_m(z^k) - F_m (w^{k-1}) + \alpha[F_m(z^k) - F_m(z^{k-1})]$
\State Send  $Q_m\left( \delta^k_m\right)$ to server
\EndFor
\For {server}
\State Compute $ \frac{1}{M} \sum \limits_{m=1}^M Q_m(\delta^k_m)$ and send to devices
\State Sends to devices $b_k$: 1 with probability $\gamma$, 0 with. probability $1 - \gamma$ \label{alg1:1bit}
\EndFor
\For {each device $m$ in parallel}
\State $\Delta^k =  \frac{1}{M} \sum \limits_{m=1}^M Q^{\text{dev}}_m(\delta^k_m) + F(w^{k-1})$
\State  $z^{k+1} = \text{prox}_{\eta g} \left(z^k + \gamma (w^k - z^k)- \eta \Delta^k\right)$ \label{alg1_q_zk1}
\If {$b_k = 1$}
\State $w^{k+1} = z^{k}$ \label{alg1:wk_zk}
\State Compute $F_m(w^{k+1})$ and send it to server
\State Get $F(w^{k+1})$ as a response from server
\Else 
\State $w^{k+1} = w^k$  \label{alg1:wk_wk}
\EndIf
\EndFor
\EndFor
	\end{algorithmic}
\end{algorithm}

At the beginning of each iteration of \texttt{Optimistic MASHA}, each device sends the compressed version of $\delta_m^k$ to the server, and the server does a reverse broadcast.  It is also possible to compress the messages coming from the server to the devices, but in practical cases there is very often no need for compression in this case since the transfer process from the server takes less time than the sending from the devices to the server \cite{alistarh2017qsgd,DIANA,beznosikov2020biased}. Also, the workers receive a bit of information $b_k$. This bit is generated randomly on the server and is equal to 1 with probability $\gamma$ (where $\gamma$ is small). Note that $b_k$ can be generated locally, it is enough to use the same random generator and set the same seed on all devices. Then, all devices make a final update on $z^{k+1}$ using $\Delta^k$. 
One can notice that to compute $\Delta^k$ we need to know $F(w^{k-1})$ (the full operator over all nodes). Then, at first glance, it seems that we need to always send the uncompressed operators. But that is not the case. It is enough to look at the $w^{k+1}$ update. We put $w^{k+1} = z^{k+1}$ if $b_k = 1$ or save it from the previous iteration $w^{k+1} = w^{k}$ if $b_k = 0$. In the case where $w^{k+1} = z^{k+1}$, we need to exchange the full values of $F_m (w^{k+1})$ to make the value $F(w^{k+1})$ known to all nodes at the next iteration, but we do this rarely, with small probability $\gamma$.

Unlike \texttt{MASHA}, we don not use arbitrary $Q_m$ compressors on the devices, but a specific set $\{Q_m\}$, the so-called Permutation compressors, introduced in \cite{szlendak2021permutation}. 

\begin{definition}[Permutation compressors \cite{szlendak2021permutation}]
  \label{def:PermK}
\\
$\bullet$ \textbf{for $d \geq M$.}
Assume that $d \geq M$ and $d  = q M$, where $q\geq 1$ is an integer. Let $\pi = (\pi_1,\dots,\pi_d)$ be a random permutation of $\{1, \dots, d\}$. Then for all $u \in \R^d$ and each $m\in \{1,2,\dots,M\}$ we define
 \begin{equation*}
 Q_m(u) \eqdef M \cdot \sum \limits_{i = q (m - 1) + 1}^{q m} u_{\pi_i} e_{\pi_i}.\end{equation*}
\\
$\bullet$ \textbf{for $d \leq M$.} Assume that $M \geq d,$ $M > 1$ and $M  = q d$, where $q\geq 1$ is an integer. 
Define the multiset $S \eqdef \{1, \dots, 1, 2, \dots, 2, \dots, d, \dots, d\}$, where each number occurs precisely $q$ times. Let $\pi=(\pi_1,\dots,\pi_{M})$ be a random permutation of $S$. Then for all $u \in \R^d$ and each $m\in \{1,2,\dots,M\}$ we define  
 \begin{equation*}
    Q_m(u) \eqdef d u_{\pi_m}e_{\pi_m}.
\end{equation*}
\end{definition}

The essence of such compressors is that their behavior is related to each other. For example, in the case when $d \geq M$ and $d = Mq$, each device transmits only $q$ components of the full gradient and, importantly, these components are unique to that device. To make such a connection between compressors, one can set the same random seeds on the devices to generate permutations. The use of the Permutation compressors allows us to simultaneously benefit from both the data similarity and the compression of the transmitted information.
Briefly, the idea can be described as follows. Since we have $\delta$-related ($\delta$-similar) operators $\{F_m\}$, in a rough approximation we can assume that $F_m \approx F$ and then $\delta^k_m \approx \delta^k = F(z^k) - F (w^{k-1}) + \alpha[F(z^k) - F(z^{k-1})]$. 
But when we compress $\{\delta^k_m\}$ with the Permutation compressors, we end up with $\frac{1}{M} \sum_{m=1}^M Q^{\text{dev}}_m(\delta^k_m)$ that is close to uncompressed $\delta^k$.
In the meantime, we transmit $M$ times less information.

The following statements give a formal convergence of Algorithm \ref{alg:optmasha}. To begin, we give the lemma about the compressors from \cite{szlendak2021permutation}.

\begin{lemma} [see \cite{szlendak2021permutation}] \label{lem:perm}
    The Permutation compressors 
from Definition~\ref{def:PermK} are unbiased and satisfy
\begin{equation}
\label{eq:AB}\E{ \norm{ \frac{1}{M} \sum \limits_{m=1}^M Q_m(a_m) - \frac{1}{M}\sum\limits_{m=1}^M a_m }^2 } \leq \frac{1}{M}\sum\limits_{m=1}^M \norm{ a_m -  \frac{1}{M}\sum\limits_{i=1}^M a_i }^2\end{equation}
for all $a_1,\dots,a_M\in \R^d$.
\end{lemma}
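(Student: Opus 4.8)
The plan is to prove the bound \eqref{eq:AB} by reducing it to a direct computation of a variance over the randomness of the permutation $\pi$, handling the two regimes ($d \geq M$ and $d \leq M$) of Definition~\ref{def:PermK} separately. First I would note that the left-hand side is exactly the variance of the random vector $X \eqdef \tfrac{1}{M}\sum_{m=1}^M Q_m(a_m)$ around its mean, and that unbiasedness of the $Q_m$ (which follows because each coordinate $i$ is selected with probability $1/M$ in the $d \geq M$ case, and each $a_m$ contributes $d \cdot \tfrac{q}{M}\cdot \tfrac{1}{q} = 1$ in expectation in the $d \leq M$ case, after the $d$ scaling) gives $\E{X} = \tfrac{1}{M}\sum_m a_m \eqdef \bar a$. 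So the task is to show $\E{\norm{X - \bar a}^2} \leq \tfrac{1}{M}\sum_m \norm{a_m - \bar a}^2$.

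For the regime $d \geq M$ with $d = qM$, the key observation is that the permutation $\pi$ partitions the coordinate index set $\{1,\dots,d\}$ into $M$ disjoint blocks of size $q$, one block assigned to each device, and device $m$ outputs $M \sum_{i \in B_m} (a_m)_i e_i$. Hence $X = \sum_{m=1}^M \sum_{i \in B_m} (a_m)_i e_i$ — for each coordinate $i$, exactly one device ``owns'' it, namely the device whose block contains $i$; call that device $\sigma(i)$, a uniformly random element of $\{1,\dots,M\}$ for each fixed $i$ (though not independent across $i$). Then $X = \sum_{i=1}^d (a_{\sigma(i)})_i e_i$, so $X - \bar a = \sum_{i=1}^d \left((a_{\sigma(i)})_i - \bar a_i\right) e_i$ and, crucially, $\E{(a_{\sigma(i)})_i} = \bar a_i$ coordinatewise. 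I would then expand $\norm{X - \bar a}^2 = \sum_{i=1}^d \left((a_{\sigma(i)})_i - \bar a_i\right)^2$, take expectations coordinatewise to get $\sum_{i=1}^d \tfrac{1}{M}\sum_{m=1}^M \left((a_m)_i - \bar a_i\right)^2 = \tfrac{1}{M}\sum_{m=1}^M \norm{a_m - \bar a}^2$, which is in fact an \emph{equality} here; the looseness in \eqref{eq:AB} comes only from the unbiasedness being used to drop cross terms, and the bound follows. The $d \leq M$ case with $M = qd$ is analogous: $X = \sum_{m=1}^M d (a_m)_{\pi_m} e_{\pi_m}$, and grouping the sum by coordinate value $j \in \{1,\dots,d\}$, the set $\{m : \pi_m = j\}$ has exactly $q$ elements, so the coefficient of $e_j$ in $X$ is $\tfrac{d}{1} \cdot \tfrac{1}{?}$ — one must be careful with the combinatorics here, using that each device $m$ is equally likely to be assigned to any of the $d$ values, so $\E{X} = \bar a$ again, and a similar coordinatewise variance expansion gives the bound.

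Since the statement is attributed to \cite{szlendak2021permutation}, the cleanest route is simply to cite that reference and present the short self-contained argument above for completeness; I expect the main subtlety — not a deep obstacle, but the place requiring care — to be the bookkeeping in the $d \leq M$ regime, where the multiset $S$ and the $q$-fold repetition of each coordinate label mean one cannot literally say ``one device owns each coordinate,'' and one instead has $q$ devices sharing each coordinate, so the variance computation must correctly average $\tfrac{1}{q}\sum_{\text{devices owning }j}$ type quantities and then re-average over which devices land on $j$. In both regimes the essential point is the same and worth stating explicitly: because different devices receive \emph{disjoint} (or evenly-multiplied) coordinate supports, the compression errors are negatively correlated rather than independent, which is exactly why the right-hand side has no $1/M$-free ``$\omega$'' blow-up term and instead only the population-variance term $\tfrac{1}{M}\sum_m \norm{a_m - \bar a}^2$, and this is what later lets the algorithm exploit $\delta$-similarity — when the $a_m = \delta_m^k$ are all close to their average, the right-hand side is small.
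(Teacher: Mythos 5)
The paper itself gives no proof of this lemma --- it is imported verbatim from \cite{szlendak2021permutation} --- so your self-contained argument is the only proof on the table, and its overall strategy is correct and is essentially the one used in the cited reference: unbiasedness plus a coordinatewise variance computation exploiting the fact that the permutation makes the devices' coordinate supports disjoint (for $d\geq M$) or evenly shared (for $d\leq M$). Your treatment of the $d\geq M$ case is complete and right, including the observation that the bound holds there with \emph{equality}; consequently your phrase ``the looseness \dots comes from dropping cross terms'' is a slip --- the $e_j$ are orthonormal, so there are no cross terms and nothing is dropped. The one genuine piece of work you leave undone is the $d\leq M$ case, which you correctly flag but do not finish. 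It does go through: fix a coordinate $j$, write $c_m \eqdef (a_m)_j-\bar a_j$ (so $\sum_m c_m=0$) and $T_j\eqdef\{m:\pi_m=j\}$, which marginally is a uniformly random $q$-subset of $\{1,\dots,M\}$; the coefficient of $e_j$ in $X-\bar a$ is $\tfrac1q\sum_{m\in T_j}c_m$, and the sampling-without-replacement identity gives $\E{\bigl(\tfrac1q\sum_{m\in T_j}c_m\bigr)^2}=\tfrac1{q^2}\bigl(\tfrac qM-\tfrac{q(q-1)}{M(M-1)}\bigr)\sum_m c_m^2=\tfrac{M-q}{qM(M-1)}\sum_m c_m^2\leq\tfrac1M\sum_m c_m^2$, since $q\geq 1$; summing over $j$ yields \eqref{eq:AB}. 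This is exactly the ``negative correlation'' you allude to, made quantitative. Two minor corrections: in your unbiasedness check for $d\leq M$ the per-coordinate contribution is $d\cdot\tfrac{q}{M}=1$ (your extra factor $\tfrac1q$ would give $\tfrac1q$, not $1$); and in that regime the lemma is a strict inequality unless $q=1$, so ``equality'' only describes the $d\geq M$ case.
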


Next, we present the main theorem.

\begin{theorem}\label{th:ALg1_conv}
	Consider the problem \eqref{eq:VI} under Assumptions~\ref{as:Lipsh}, \ref{as:strmon} and \ref{as:delta}. Let  $\{\z^k\}$ be the sequence generated by Algorithm~\ref{alg:optmasha} with the compressors from Definition \ref{def:PermK} and parameters 
	\begin{align*}
		0 < \gamma  \leq \frac{1}{8}, \quad \alpha = \frac{1}{2}, \quad \eta = \min\left\{\frac{\sqrt{\alpha\gamma}}{2 \delta}, \frac{1}{8(L + \delta)}\right\}.
	\end{align*}
	Then, given $\varepsilon>0$, the number of iterations for 
$\|\z^k - \z^*\|^2 \leq \varepsilon$ is 
\begin{equation*}
    O\left( \left[\frac{1}{\gamma} +\frac{L}{\mu} + \frac{\delta}{ \sqrt{\gamma}\mu}\right] \log \frac{1}{\varepsilon} \right).
\end{equation*}
\end{theorem}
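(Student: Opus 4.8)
The plan is to track a Lyapunov function of the form
\begin{equation*}
\Psi^k \;=\; \mathbb{E}\Big[\,\|z^k-z^*\|^2 \;+\; c_1\,\|z^k-z^{k-1}\|^2 \;+\; \tfrac{c_2}{\gamma}\,\|w^{k-1}-z^*\|^2\,\Big]
\end{equation*}
for suitable absolute constants $c_1,c_2>0$ (possibly with an additional optimistic cross-term), and to establish a one-step contraction $\Psi^{k+1}\le(1-\rho)\,\Psi^k$ with $\rho=\Omega\!\big(\min\{\gamma,\ \mu/L,\ \sqrt{\gamma}\,\mu/\delta\}\big)$. Since $\Psi^k\ge\mathbb{E}\|z^k-z^*\|^2$ and $\Psi^0$ is a fixed constant (recall $z^0=w^0=w^{-1}$), iterating this contraction and using $\log\tfrac{1}{1-\rho}\ge\rho$ gives $\mathbb{E}\|z^k-z^*\|^2\le\varepsilon$ after $O\!\big(\tfrac1\rho\log\tfrac1\varepsilon\big)=O\!\big(\big[\tfrac1\gamma+\tfrac L\mu+\tfrac{\delta}{\sqrt{\gamma}\,\mu}\big]\log\tfrac1\varepsilon\big)$ iterations, which is the claim once $\alpha=\tfrac12$ is substituted. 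For the one-step estimate I would start from the prox-optimality in line~\ref{alg1_q_zk1}: for every $z\in\mathcal{Z}$, $\langle z^{k+1}-z^k-\gamma(w^k-z^k)+\eta\Delta^k,\ z^{k+1}-z\rangle\le\eta g(z)-\eta g(z^{k+1})$; set $z=z^*$ and combine with $\|z^{k+1}-z^*\|^2=\|z^k-z^*\|^2+2\langle z^{k+1}-z^k,z^{k+1}-z^*\rangle-\|z^{k+1}-z^k\|^2$. The anchoring term $2\gamma\langle w^k-z^k,z^{k+1}-z^*\rangle$, expanded via $z^{k+1}-z^*=(z^{k+1}-z^k)+(z^k-z^*)$ and the polarization identity, yields a useful $-\gamma\|z^k-z^*\|^2$, together with $\gamma\|w^k-z^*\|^2$ (later absorbed by the $w$-recursion) and $-\gamma\|w^k-z^k\|^2$ (which cancels a matching positive term from Young's inequality on the cross part).

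Next I would isolate the randomness of $\Delta^k$. By Lemma~\ref{lem:perm} the permutation compressors are unbiased, so $\mathbb{E}_k\Delta^k=(1+\alpha)F(z^k)-\alpha F(z^{k-1})$ and $\xi^k\eqdef\tfrac1M\sum_{m}Q_m(\delta_m^k)-\delta^k$ is conditionally mean-zero, where $\delta^k\eqdef\tfrac1M\sum_m\delta_m^k$. To decouple $\xi^k$ from $z^{k+1}$ I would introduce the $\mathcal{F}_k$-measurable virtual iterate $\hat z^{k+1}\eqdef\prox_{\eta g}\!\big(z^k+\gamma(w^k-z^k)-\eta\,\mathbb{E}_k\Delta^k\big)$; non-expansiveness of $\prox_{\eta g}$ gives $\|z^{k+1}-\hat z^{k+1}\|\le\eta\|\xi^k\|$, so after conditional expectation the stochastic cross-term $-2\eta\langle\xi^k,z^{k+1}-z^*\rangle$ contributes at most $2\eta^2\,\mathbb{E}_k\|\xi^k\|^2$. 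The key estimate now combines~\eqref{eq:AB},
\begin{equation*}
\mathbb{E}_k\|\xi^k\|^2 \;\le\; \frac1M\sum_{m=1}^M\big\|\delta_m^k-\delta^k\big\|^2 ,
\end{equation*}
with the identity $\delta_m^k-\delta^k=(F_m-F)(z^k)-(F_m-F)(w^{k-1})+\alpha\big[(F_m-F)(z^k)-(F_m-F)(z^{k-1})\big]$ and Assumption~\ref{as:delta}, which gives $\mathbb{E}_k\|\xi^k\|^2\le 4\delta^2\big(\|z^k-z^*\|^2+\|w^{k-1}-z^*\|^2\big)+2\alpha^2\delta^2\|z^k-z^{k-1}\|^2$. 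Because $\eta\le\sqrt{\alpha\gamma}/(2\delta)$ forces $\eta^2\delta^2\le\alpha\gamma/4$, after multiplication by $\eta^2$ these three contributions are of order $\alpha\gamma\|z^k-z^*\|^2$, $\alpha\gamma\|w^{k-1}-z^*\|^2$ and $\alpha^3\gamma\|z^k-z^{k-1}\|^2$, absorbed respectively by the anchor's $-\gamma\|z^k-z^*\|^2$, by the contraction of the $\tfrac{c_2}{\gamma}$-weighted $w$-term, and by the $c_1$-weighted $\|z^k-z^{k-1}\|^2$ term.

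It remains to handle the deterministic part $-2\eta\langle\mathbb{E}_k\Delta^k,z^{k+1}-z^*\rangle$. Writing $(1+\alpha)F(z^k)-\alpha F(z^{k-1})=F(z^{k+1})+\big(F(z^k)-F(z^{k+1})\big)+\alpha\big(F(z^k)-F(z^{k-1})\big)$, I would apply Assumption~\ref{as:strmon} together with the variational inequality~\eqref{eq:VI} at $z^*$ to the $F(z^{k+1})$-part to obtain the contraction $-2\mu\eta\|z^{k+1}-z^*\|^2$ and cancel the composite terms $g(z^*),g(z^{k+1})$ (the latter coming from the prox step), while the two Lipschitz corrections are absorbed by the standard optimistic-gradient / variance-reduced extragradient argument (cf. \cite{alacaoglu2021stochastic,kovalev2022optimalvi}), which splits $z^{k+1}-z^*=(z^{k+1}-z^k)+(z^k-z^*)$, invokes Assumption~\ref{as:Lipsh} and Young's inequality, and uses $\eta\le 1/(8(L+\delta))$ so that the arising $\|z^{k+1}-z^k\|^2$ terms are controlled by the $-\|z^{k+1}-z^k\|^2$ of the expansion and the $\|z^k-z^{k-1}\|^2$ terms telescope through the $c_1$-weighted part of $\Psi^k$. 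Finally, taking total expectation and adding the recursion for $\mathbb{E}\|w^{k+1}-z^*\|^2$ induced by line~\ref{alg1:wk_zk} with weight $\tfrac{c_2}{\gamma}$ — whose slow $(1-\gamma)$ self-contraction is exactly what puts the $\tfrac1\gamma$ into the rate — one reads off $\rho$ from the coefficients of $\|z^{k+1}-z^*\|^2$, $\|z^{k+1}-z^k\|^2$ and $\|w^k-z^*\|^2$.

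The main obstacle is this simultaneous bookkeeping. The compression variance is controlled only through $\delta^2\|z^k-w^{k-1}\|^2$, yet $w^{k-1}$ may be $\Theta(1/\gamma)$ iterations stale, so this quantity cannot be dominated by a single-step decrease and must instead be routed into the slowly contracting $\tfrac{c_2}{\gamma}$-weighted reference-point term; making the single negative budget $-\gamma\|z^k-z^*\|^2$ cover both that variance leakage and the $\|z^k-z^*\|^2$ residue that the $w$-recursion feeds back, while $\mu\eta$ drives the principal contraction and $\eta L$ stays small enough for the optimistic telescoping to close, is precisely what forces the two cases $\eta=\min\{\sqrt{\alpha\gamma}/(2\delta),\,1/(8(L+\delta))\}$ and fixes the constants $c_1,c_2$.
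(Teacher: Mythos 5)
Your plan follows essentially the same route as the paper's proof: a variance lemma for the permutation compressors via \eqref{eq:AB} and Assumption~\ref{as:delta}, followed by a one-step contraction of a Lyapunov function containing $\|z^k-z^*\|^2$, a $\tfrac{1}{\gamma}$-weighted $\|w\cdot-z^*\|^2$ term, the optimistic cross-term, and $\|z^k-z^{k-1}\|^2$; the decomposition $\Ek{\Delta^k}=F(z^{k+1})+(F(z^k)-F(z^{k+1}))+\alpha(F(z^k)-F(z^{k-1}))$ and the role of each stepsize condition are identical. Your virtual-iterate trick for the mean-zero cross term is a clean substitute for the paper's split $z^{k+1}-z^*=(z^{k+1}-z^k)+(z^k-z^*)$ plus the tower property, and yields the same $2\eta^2\Ek{\|\xi^k\|^2}$ penalty.

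The one step I would not let stand as written is the absorption of the compression variance. The paper keeps the bound in the form $2\delta^2\|z^k-w^{k-1}\|^2+2\alpha^2\delta^2\|z^k-z^{k-1}\|^2$ and pairs the first term with a dedicated Lyapunov component $\gamma\|w^k-z^{k+1}\|^2$ — which is exactly the negative term $-\gamma\|z^{k+1}-w^k\|^2$ produced by the polarization identity on the anchor, the term you propose to spend immediately on "cancelling a Young term." Kept in $\Psi^k$, that component contracts with factor $\tfrac{4\delta^2\eta^2}{\gamma}\le\alpha$, which is precisely what $\eta\le\tfrac{\sqrt{\alpha\gamma}}{2\delta}$ buys. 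Your alternative — splitting $\|z^k-w^{k-1}\|^2\le 2\|z^k-z^*\|^2+2\|w^{k-1}-z^*\|^2$ and routing the second piece into the slack of the $\tfrac{c_2}{\gamma}$-weighted $w$-recursion — is tighter than it looks: the feedback $c_2\|z^k-z^*\|^2$ from line~\ref{alg1:wk_zk} pins $c_2=O(\gamma+\mu\eta)$, so the per-step slack on the $w$-term is only about $c_2\cdot\tfrac{\gamma-\rho}{\gamma}\|w^{k-1}-z^*\|^2=O(\gamma)\|w^{k-1}-z^*\|^2$, while with the stated stepsize the piece to be absorbed is $8\eta^2\delta^2\|w^{k-1}-z^*\|^2\le 2\alpha\gamma\|w^{k-1}-z^*\|^2=\gamma\|w^{k-1}-z^*\|^2$ for $\alpha=\tfrac12$; simultaneously the $\|z^k-z^*\|^2$ piece competes with the $w$-feedback for the single $-\gamma\|z^k-z^*\|^2$ anchor budget. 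The accounting closes only after shrinking $\eta$ (equivalently the constant in $\sqrt{\alpha\gamma}/(2\delta)$) by a constant factor — harmless for the $O(\cdot)$ rate, but it does not verify the theorem with the parameters as stated. Keeping $\gamma\|w^k-z^{k+1}\|^2$ in the Lyapunov function avoids the issue entirely and is the route the paper takes.
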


The proof of this Theoerm is given in Appendix \ref{sec:proof_app}.

The resulting convergence estimate depends on the parameter $\gamma$. Let us find the way to choose it. In average, once per $\tfrac{1}{\gamma}$ iterations (when $b_k = 1$), we send uncompressed information. Hence, we can find the best option for $\gamma$. At each iteration the device sends $\mathcal{O}\left( \tfrac{1}{M} + \gamma \right)$ bits  -- each time information compressed by $\tfrac{1}{M}$ times and with probability $\gamma$ we send the full package. We get the optimal choice of $\gamma$:
\begin{corollary} \label{main_cor1_q}
Under the conditions of Theorem \ref{th:ALg1_conv}, and with $\gamma = \tfrac{1}{M}$, \texttt{Optimistic MASHA} with the Permutation compressors has the following estimate on the total number of transmitted information to find $\varepsilon$-solution
\begin{equation*}
    O\left( \left[\frac{L}{M \mu} + \frac{\delta}{ \sqrt{M}\mu}\right] \log \frac{1}{\varepsilon} \right).
\end{equation*}
\end{corollary}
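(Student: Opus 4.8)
The plan is to combine the iteration count of Theorem~\ref{th:ALg1_conv} with a per-iteration communication budget and then optimize the free parameter $\gamma$. First I would count the uplink traffic generated by one iteration of Algorithm~\ref{alg:optmasha}. In every iteration each worker $m$ sends a single compressed vector $Q_m(\delta_m^k)$; by Definition~\ref{def:PermK}, in the relevant high-dimensional regime $d\ge M$ this message carries only $q=d/M$ of the $d$ coordinates, i.e.\ it is $M$ times cheaper than transmitting a full operator value $F_m(\cdot)$. The only extra uplink occurs on the line with $b_k=1$, when the worker also sends the full vector $F_m(w^{k+1})$; since $b_k=1$ with probability $\gamma$, linearity of expectation gives an amortized uplink cost of $O\!\left(\frac{1}{M}+\gamma\right)$ full-vector-equivalents per worker per iteration (the downlink broadcasts from the server are uncompressed but, as discussed after the algorithm, are not the bottleneck and are of the same order, so they do not affect the final estimate).

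Next I would multiply this per-iteration cost by the iteration complexity of Theorem~\ref{th:ALg1_conv}, $K=O\!\left(\left[\frac{1}{\gamma}+\frac{L}{\mu}+\frac{\delta}{\sqrt{\gamma}\,\mu}\right]\log\frac{1}{\varepsilon}\right)$, giving a total communication of order
\[
\left(\frac{1}{M}+\gamma\right)\left(\frac{1}{\gamma}+\frac{L}{\mu}+\frac{\delta}{\sqrt{\gamma}\,\mu}\right)\log\frac{1}{\varepsilon}.
\]
Setting $\gamma=\frac{1}{M}$ (valid since then $\gamma\le\frac18$ provided $M\ge 8$; otherwise one takes $\gamma=\frac18$ or falls back to the uncompressed method) makes the prefactor $\frac{1}{M}+\gamma=\Theta\!\left(\frac{1}{M}\right)$ and turns the second factor into $O\!\left(M+\frac{L}{\mu}+\frac{\delta\sqrt{M}}{\mu}\right)$; the product is $O\!\left(\left[1+\frac{L}{M\mu}+\frac{\delta}{\sqrt{M}\,\mu}\right]\log\frac{1}{\varepsilon}\right)$, and the residual $\log\frac{1}{\varepsilon}$ term is of lower order (it is dominated by the remaining terms, e.g.\ when $L/\mu\ge M$, or may simply be discarded), which is exactly the claimed bound. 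I would also note that $\gamma=\frac1M$ is the natural balance point: it makes the rare expensive full rounds ($\approx\gamma K$ of them) contribute only $O(1)$ full-vector-equivalents, i.e.\ no more than the compressed rounds.

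The computations are routine; the only place needing a little care is the amortization of the random full-communication rounds. With expectations it is immediate by linearity; for a high-probability version one invokes a Chernoff bound to ensure the number of rounds with $b_k=1$ among the first $K$ is $O(\gamma K)$, which does not change the order. A minor secondary point is the case $d\le M$ of the Permutation compressors, where the compression factor is governed by $\min\{d,M\}$ rather than $M$; since the corollary is phrased for the high-dimensional setting, I would just flag this and work with $d\ge M$.
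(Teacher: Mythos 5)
Your proposal is correct and follows essentially the same route as the paper: the paper likewise counts an amortized per-iteration uplink cost of $O\!\left(\tfrac{1}{M}+\gamma\right)$ full-vector equivalents (compression by a factor of $M$ via the Permutation compressors plus the rare full exchange when $b_k=1$), multiplies by the iteration complexity of Theorem~\ref{th:ALg1_conv}, and sets $\gamma=\tfrac{1}{M}$. Your extra remarks on the residual additive $\log\tfrac{1}{\varepsilon}$ term, the small-$M$ caveat for $\gamma\le\tfrac18$, and the $d\le M$ regime are sensible refinements that the paper leaves implicit, but they do not change the argument.
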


\textbf{Discussion of the results in terms of compression.} As noted earlier, under conditions of uniformly distributed data, the parameter $\delta = \mathcal{\tilde O}\left( \tfrac{L}{\sqrt{b}}\right)$, where $b$ is the number of local data points on each of the devices. Note that a typical situation is when $b \geq M$. Then, the estimate from Corollary \ref{main_cor1_q} can be rewritten as 
\begin{equation*}
    O\left( \frac{L}{M \mu} \log \frac{1}{\varepsilon} \right).
\end{equation*}
State of the art methods for solving variational inequalities \cite{juditsky2008solving,beznosikov2020local}, which are also optimal algorithms in terms of the number of communications (but not the number of transmitted information) give the next estimate on the number of transmitted information
\begin{equation*}
    O\left( \frac{L}{\mu} \log \frac{1}{\varepsilon} \right).
\end{equation*}
\texttt{MASHA} can guarantee the following bound for the amount of transferred information:
\begin{equation*}
    O\left( \frac{L}{\sqrt{M} \mu} \log \frac{1}{\varepsilon} \right).
\end{equation*}
This shows that our result is $M$ times better than the uncompressed methods, and better than \texttt{MASHA} (which does not use $\delta$-relatedness) by $\sqrt{M}$ times. 

\textbf{Discussion of the results in terms of data similarity.} The algorithm from \cite{beznosikov2021distributed} using similarity for variational inequalities (in fact, for saddle point problems) has the following estimate for the number of information to be forwarded
\begin{equation*}
    O\left( \frac{\delta}{\mu} \log \frac{1}{\varepsilon} \right).
\end{equation*}
If $M \geq b$, the estimate from Corollary \ref{main_cor1_q} is transformed as follows
\begin{equation*}
    O\left( \left[\frac{L}{\sqrt{M} \sqrt{b} \mu} + \frac{\delta}{ \sqrt{M}\mu}\right] \log \frac{1}{\varepsilon} \right) = O\left( \frac{\delta}{ \sqrt{M}\mu} \log \frac{1}{\varepsilon} \right).
\end{equation*}
This result is $\sqrt{M}$ times better than from \cite{beznosikov2021distributed}.

\section{Experiments}

The aim of our experiments is to test the results of Corollary \ref{main_cor1_q}, namely the dependence of convergence on the parameter $\delta$. To be able to vary the parameter $\delta$ we conduct our experiments on a distributed bilinear problem, i.e., the problem \eqref{eq:minmax} with 
\begin{align}
    \label{bilinear}
     f_m(x,y) \eqdef  x^\top A_m y + a^\top_m x + b^\top_m y + \frac{\lambda}{2} \| x\|^2 -  \frac{\lambda}{2} \|y\|^2,
\end{align}
where $A_m \in \R^{d \times d}$, $a_m, b_m \in \R^d$. This problem is $\lambda$-strongly convex--strongly concave and, moreover, $L$-smooth with $ L = \|A \|_2$ for $A = \tfrac{1}{M} \sum_{m=1}^M A_m$.  We take $M = 10$, $d=100$ and generate matrix $A$ (with $\|A \|_2 \approx 100$) and vectors $a_m, b_m$ randomly. We also generate matrices $B_m$ such that all elements of these matrices are independent and have an unbiased normal distribution with variance $\sigma^2$. Using these matrices, we compute $A_m = A + B_m$. It can be considered that $\delta \sim \sigma$. In particular, we run three experiment setups: with small $\sigma \approx \tfrac{\|A \|_2}{100}$, medium $\sigma \approx \tfrac{\|A \|_2}{10}$ and big $\sigma \approx \|A \|_2$. $\lambda$ is chosen as $\tfrac{\|A\|_2}{10^5}$.

We use the new algorithm -- \texttt{Optimistic MASHA}, the existing compression algorithm \texttt{MASHA} \cite{beznosikov2021distributedcompr}, and the classic uncompressed \texttt{Extra Gradient} \cite{juditsky2008solving,beznosikov2020local} as competitors. 
In \texttt{Optimistic MASHA} and \texttt{MASHA} we use the Permutation compressors. All methods are tuned as outlined in the theory of the corresponding papers.

\begin{figure}[h]
\begin{minipage}{0.33\textwidth}
  \centering
\includegraphics[width =  \textwidth ]{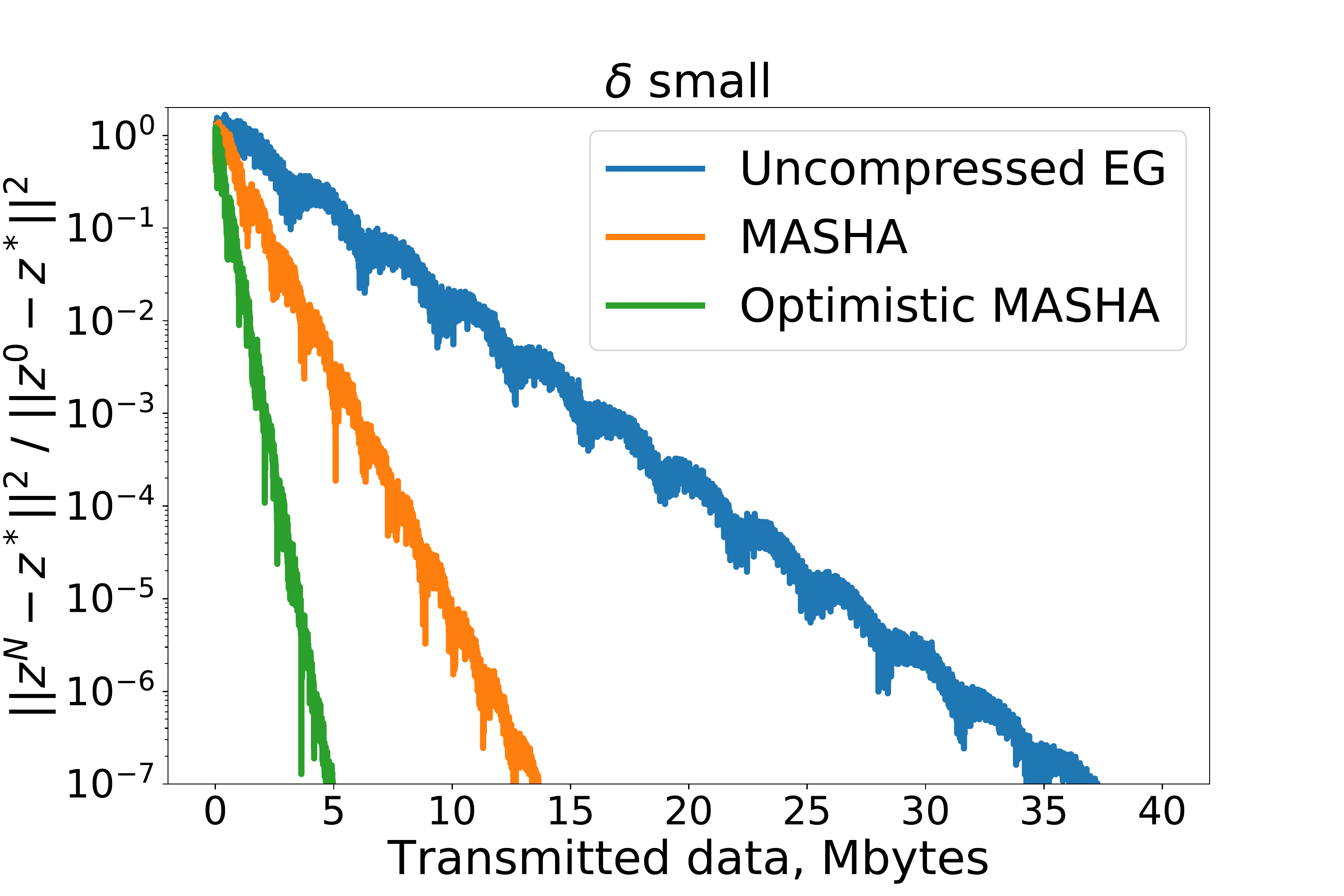}
\end{minipage}%
\begin{minipage}{0.33\textwidth}
  \centering
\includegraphics[width =  \textwidth ]{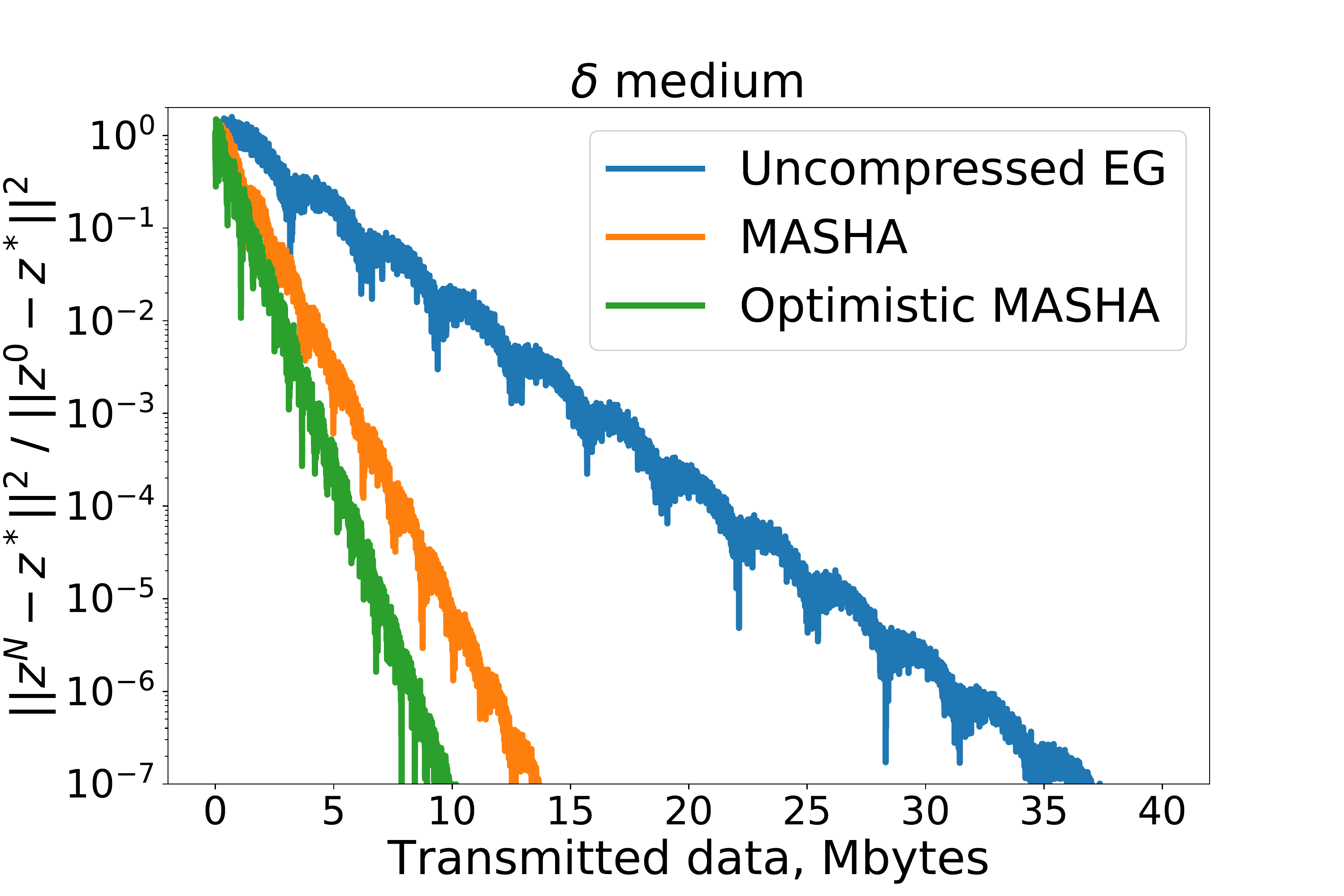}
\end{minipage}%
\begin{minipage}{0.33\textwidth}
  \centering
\includegraphics[width =  \textwidth ]{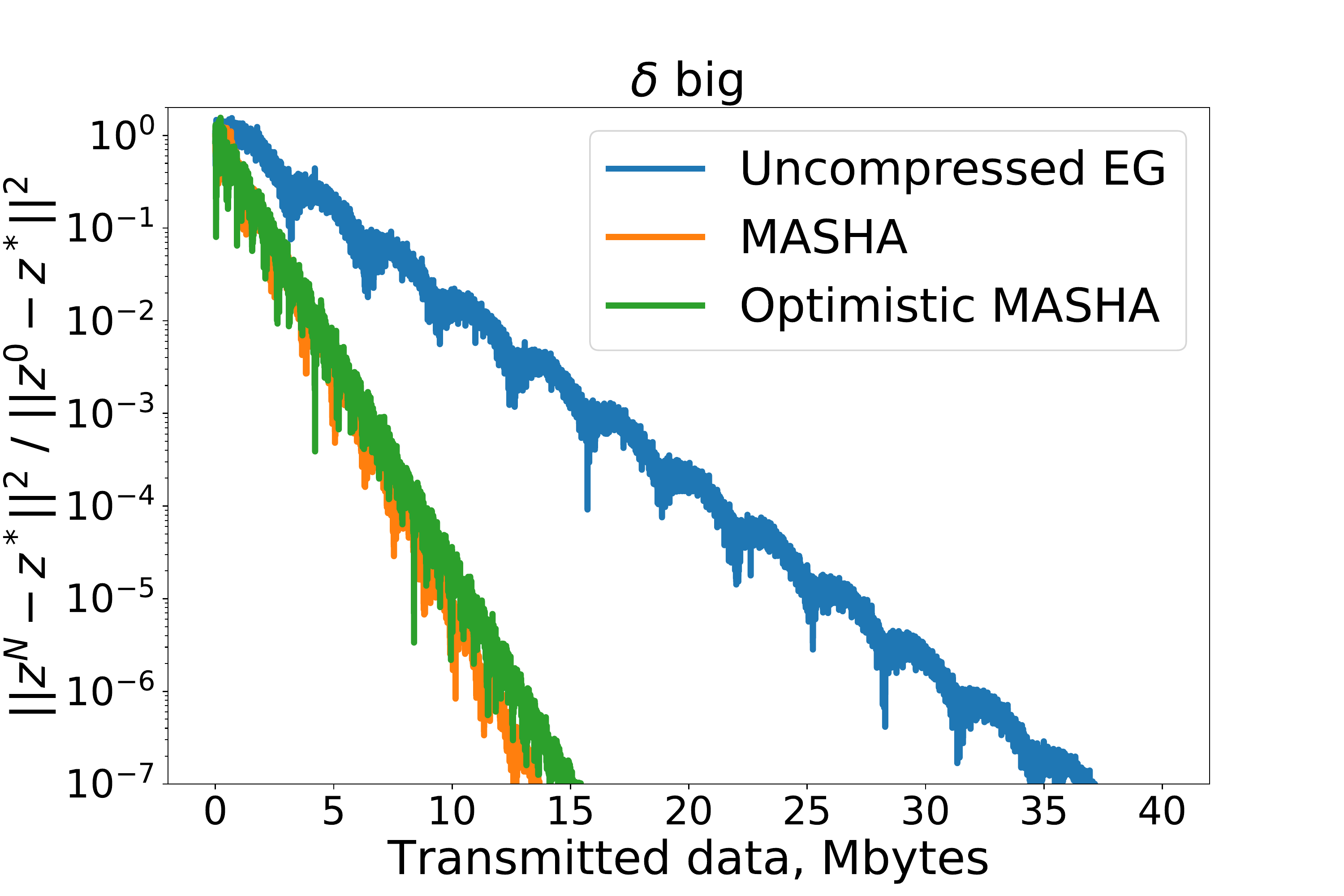}
\end{minipage}%
\\
\begin{minipage}{0.33\textwidth}
  \centering
(a) small $\delta$
\end{minipage}%
\begin{minipage}{0.33\textwidth}
 (b) medium $\delta$
\end{minipage}%
\begin{minipage}{0.33\textwidth}
  (c) big $\delta$
\end{minipage}%
\caption{\small Bilinear problem  \eqref{bilinear}: Comparison of state-of-the-art methods with compression for variational inequalities for small, medium and big similarity parameters.}
    \label{fig:min}
\end{figure}
See Figure \ref{fig:min} for the results. For small $\delta$ \texttt{Optimistic MASHA} is about $\sqrt{M}$ times superior to \texttt{MASHA}, and also outperforms the uncompressed method by a factor of $M$. With increasing $\delta$ \texttt{Optimistic MASHA} comes close to \texttt{MASHA} in its convergence.

\section{Conclusion}

In this paper, we considered distributed methods for solving the variational inequality problem. We presented the new method \texttt{Optimistic MASHA}. By combining two techniques: compression and data similarity, our method allows us to significantly reduce the number of information transmitted during communications. Experiments confirm the theoretical conclusions.

\bibliographystyle{splncs04}
\bibliography{ltr}

\newpage 

\appendix



\section{Proof of Theorem \ref{th:ALg1_conv}} \label{sec:proof_app}

\begin{lemma} \label{var_lem_fix}
    Consider the problem \eqref{eq:VI} under Assumption~\ref{as:delta}. Let  $\{z^k\}$ be the sequence generated by Algorithm~\ref{alg:optmasha} with compressors from Definition \ref{def:PermK}. Then, the following inequality holds:
	\begin{equation}\label{vrvi:eq:1}
		\E{\sqn{\Delta^k - \Ek{\Delta^k}}} \leq 2\delta^2\E{\sqn{z^{k} - w^{k-1}} + \alpha^2\sqn{z^k- z^{k-1}}},
	\end{equation}
	where $\Ek{\Delta^k}$ is equal to
	\begin{equation}\label{vrvi:eq:3}
		\Ek{\Delta^k} = F(z^k) + \alpha(F(z^k) - F(z^{k-1})).
	\end{equation}
\end{lemma}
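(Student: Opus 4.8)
The plan is to compute the conditional expectation of $\Delta^k$ directly, then apply the variance-type bound for the Permutation compressors (Lemma \ref{lem:perm}) and close with the $\delta$-relatedness assumption. Recall that on iteration $k$ the quantity actually used in the update is $\Delta^k = \frac{1}{M}\sum_{m=1}^M Q_m^{\text{dev}}(\delta_m^k) + F(w^{k-1})$, where $\delta_m^k = F_m(z^k) - F_m(w^{k-1}) + \alpha[F_m(z^k) - F_m(z^{k-1})]$. Since the Permutation compressors are unbiased and $z^k, z^{k-1}, w^{k-1}$ are $\mathcal{F}_k$-measurable, I would first take $\Ek{\cdot}$ through the sum to get $\Ek{\Delta^k} = \frac{1}{M}\sum_{m=1}^M \delta_m^k + F(w^{k-1})$. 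Using $\frac{1}{M}\sum_m F_m = F$ this telescopes to $F(z^k) - F(w^{k-1}) + \alpha[F(z^k) - F(z^{k-1})] + F(w^{k-1}) = F(z^k) + \alpha(F(z^k) - F(z^{k-1}))$, which is exactly \eqref{vrvi:eq:3}.

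For the variance bound \eqref{vrvi:eq:1}, I would write $\Delta^k - \Ek{\Delta^k} = \frac{1}{M}\sum_{m=1}^M Q_m(\delta_m^k) - \frac{1}{M}\sum_{m=1}^M \delta_m^k$, since the deterministic term $F(w^{k-1})$ cancels. Applying Lemma \ref{lem:perm} with $a_m = \delta_m^k$ gives
\begin{equation*}
\E{\sqn{\Delta^k - \Ek{\Delta^k}}} \leq \frac{1}{M}\sum_{m=1}^M \E{\sqn{\delta_m^k - \frac{1}{M}\sum_{i=1}^M \delta_i^k}}.
\end{equation*}
Now $\delta_m^k - \frac{1}{M}\sum_i \delta_i^k = [F_m(z^k) - F(z^k)] - [F_m(w^{k-1}) - F(w^{k-1})] + \alpha\left([F_m(z^k) - F(z^k)] - [F_m(z^{k-1}) - F(z^{k-1})]\right)$. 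The first bracketed difference is bounded in norm by $\delta\|z^k - w^{k-1}\|$ by Assumption \ref{as:delta}, and the second (with the $\alpha$ factor) by $\alpha\delta\|z^k - z^{k-1}\|$. Using $\|a+b\|^2 \leq 2\|a\|^2 + 2\|b\|^2$ and summing over $m$ yields $\frac{1}{M}\sum_m \sqn{\delta_m^k - \frac1M\sum_i\delta_i^k} \leq 2\delta^2\sqn{z^k - w^{k-1}} + 2\alpha^2\delta^2\sqn{z^k - z^{k-1}}$, which is the claimed bound.

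The only genuine subtlety is the bookkeeping of what is random and what is $\mathcal{F}_k$-measurable — in particular confirming that $w^{k-1}$ and $F(w^{k-1})$ are already known at the start of iteration $k$ (they are, having been set and communicated at iteration $k-1$ or earlier), so they pass through $\Ek{\cdot}$ as constants and drop out of the centered quantity. Everything else is a direct substitution plus one application of Lemma \ref{lem:perm} and one application of Assumption \ref{as:delta}; there is no real obstacle, but care is needed so that the $\delta$-relatedness inequality is applied to the correct pairs of points and the $\alpha$ coefficient lands in the right place.
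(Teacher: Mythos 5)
Your proposal is correct and follows essentially the same route as the paper's proof: unbiasedness gives \eqref{vrvi:eq:3}, the centered quantity reduces to $\frac{1}{M}\sum_m Q_m(\delta_m^k) - \frac{1}{M}\sum_m \delta_m^k$, Lemma~\ref{lem:perm} bounds its second moment by the empirical variance of the $\delta_m^k$, and regrouping into $F_m - F$ differences plus $\sqn{a+b} \leq 2\sqn{a} + 2\sqn{b}$ and Assumption~\ref{as:delta} yields \eqref{vrvi:eq:1}. No gaps.
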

\begin{proof}
    Due to the unbiasedness of $Q_m$, we can make sure that \eqref{vrvi:eq:3} is correct. Then using definitions of $\delta^k$ and $\Delta^k$ from Algorithm~\ref{alg:optmasha}, we get
	\begin{align*}
		&\Ek{\sqn{\Delta^k - \Ek{\Delta^k}}} 
		\\&=
		\Ek{\sqn{\frac{1}{M} \sum \limits_{m=1}^M Q_m(\delta^k_m) + F(w^{k-1}) -F(z^k) - \alpha(F(z^k) - F(z^{k-1}))}} 
		\\&=
		\Ek{\sqn{\frac{1}{M} \sum \limits_{m=1}^M Q_m(\delta^k_m)- \frac{1}{M} \sum \limits_{m=1}^M \delta^k_m}}.
	\end{align*}
By definition of $Q^{\text{dev}}_m$ (Lemma \ref{lem:perm}), we obtain	
	\begin{align*}
		&\Ek{\sqn{\Delta^k - \Ek{\Delta^k}}} 
		\\&\leq 
		\frac{1}{M} \sum \limits_{m=1}^M \sqn{ \delta^k_m - \frac{1}{M} \sum \limits_{m=1}^M \delta^k_m}
		\\&=
		\frac{1}{M} \sum \limits_{m=1}^M \bigg\|F_m(z^k) - F_m (w^{k-1}) + \alpha[F_m(z^k) - F_m(z^{k-1})] + F(w^{k-1})
		\\&\hspace{0.4cm}
		- F(z^k) - \alpha(F(z^k) - F(z^{k-1}))\bigg\|^2
		\\&=
		\frac{1}{M} \sum \limits_{m=1}^M \bigg\|F_m(z^k) - F(z^k) - F_m (w^{k-1}) + F(w^{k-1})  
		\\&\hspace{0.4cm}
		+ \alpha[F_m(z^k) - F(z^k) - F_m(z^{k-1}) + F(z^{k-1})] \bigg\|^2
		\\&\leq 
		\frac{2}{M} \sum \limits_{m=1}^M \sqn{F_m(z^k) - F(z^k) - F_m (w^{k-1}) + F(w^{k-1}) }
		\\&\hspace{0.4cm}
		+\frac{2\alpha^2}{M} \sum \limits_{m=1}^M \sqn{F_m(z^k) - F(z^k) - F_m(z^{k-1}) + F(z^{k-1})}.
	\end{align*}
Assumption on $\delta$-relatedness gives
	\begin{align*}
		\Ek{\sqn{\Delta^k - \Ek{\Delta^k}}} \leq&
		2 \delta^2 \sqn{z^k - w^{k-1} } 
		+2\alpha^2 \delta^2 \sqn{z^k - z^{k-1}}.
	\end{align*}
This concludes the proof of \eqref{vrvi:eq:1}.
\EndProof
\end{proof}
Before proving the main lemma of this section, we define the following Lyapunov function:
	\begin{align}
	\label{eq:Lf_fixed}
		\Psi^{k+1}&\eqdef (1+ 2 \mu \eta)\sqn{\z^{k+1} - \z^*} + \frac{\gamma + \eta\mu}{\gamma}\sqn{\w^{k+1} - \z^*}
	\\&\quad
	+ 2\eta\<\F(\z^k) - \F(\z^{k+1}), \z^{k+1} - \z^*>
	+ \gamma\sqn{\w^k - \z^{k+1}} +\frac{1}{8} \sqn{\z^{k+1}- \z^{k}} \notag.
	\end{align}

\begin{lemma}\label{lem:fixed_key}
	Consider the problem \eqref{eq:VI} under Assumptions~\ref{as:Lipsh}, \ref{as:strmon} and \ref{as:delta}. Let  $\{z^k\}$ be the sequence generated by Algorithm~\ref{alg:optmasha} with compressors from Definition \ref{def:PermK} and parameters 
	\begin{align*}
		0<\gamma \leq \frac{1}{8}, \quad \alpha \in (0;1), \quad \eta \leq \min\left\{\frac{\sqrt{\alpha\gamma}}{2 \delta}, \frac{1}{8(L + \delta)}\right\}.
	\end{align*}
	Then, after $k$ iterations we get
\begin{align*}
	\E{\frac{1}{2} \|z^k - z^* \|^2} \leq \max\left[ \left(1- \frac{\mu \eta}{2}\right); \left(1- \frac{1}{\frac{1}{\eta\mu} + \frac{1}{\gamma}}\right) ; \alpha ; \frac{1}{2} \right] \cdot^k \cdot \Psi^0.
\end{align*}
\end{lemma}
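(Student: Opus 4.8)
The plan is to establish a one-step Lyapunov contraction
\[
\Ek{\Psi^{k+1}} \le q\,\Psi^k, \qquad q \eqdef \max\left[\,1-\tfrac{\mu\eta}{2},\; 1-\Bigl(\tfrac{1}{\eta\mu}+\tfrac1\gamma\Bigr)^{-1},\; \alpha,\; \tfrac12\,\right],
\]
where $\Ek{\cdot}$ is the conditional expectation given the history up to iteration $k$, and then to unroll it. Taking total expectations yields $\E{\Psi^{k}} \le q^k\Psi^0$, and combining with the elementary bound $\Psi^k \ge \tfrac12\|z^k - z^*\|^2$ — which holds because $2\eta\langle F(z^{k-1})-F(z^k), z^k - z^*\rangle \ge -\tfrac18\|z^k - z^{k-1}\|^2 - 8\eta^2 L^2\|z^k - z^*\|^2$ by Cauchy–Schwarz, Assumption~\ref{as:Lipsh} and Young's inequality, while $8\eta^2 L^2 \le \tfrac18$ since $\eta \le \tfrac1{8(L+\delta)}$ — gives exactly the assertion of the lemma (note $\Psi^0$ is deterministic since $z^0=w^0=w^{-1}=z^{-1}$).

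I would start from the optimality condition for the prox step of Algorithm~\ref{alg:optmasha}: writing $v^k = (1-\gamma)z^k + \gamma w^k - \eta\Delta^k$, the iterate $z^{k+1} = \prox_{\eta g}(v^k)$ satisfies $\langle v^k - z^{k+1}, z^* - z^{k+1}\rangle \le \eta g(z^*) - \eta g(z^{k+1})$. Expanding the two convex-combination pieces $z^k - z^{k+1}$ and $w^k - z^{k+1}$ with $\langle a-b,c-b\rangle = \tfrac12(\|a-b\|^2 + \|c-b\|^2 - \|a-c\|^2)$ turns this into
\[
\tfrac12\|z^{k+1}-z^*\|^2 + \tfrac{1-\gamma}{2}\|z^k - z^{k+1}\|^2 + \tfrac{\gamma}{2}\|w^k - z^{k+1}\|^2 + \eta\langle\Delta^k, z^{k+1}-z^*\rangle \le \eta g(z^*) - \eta g(z^{k+1}) + \tfrac{1-\gamma}{2}\|z^k - z^*\|^2 + \tfrac{\gamma}{2}\|w^k - z^*\|^2 .
\]
Then I take $\Ek{\cdot}$ and split $\Delta^k = \Ek{\Delta^k} + (\Delta^k - \Ek{\Delta^k})$, using $\Ek{\Delta^k} = F(z^k) + \alpha(F(z^k) - F(z^{k-1}))$ from \eqref{vrvi:eq:3}, and regroup $\Ek{\Delta^k} = F(z^{k+1}) + \bigl(F(z^k) - F(z^{k+1})\bigr) + \alpha\bigl(F(z^k) - F(z^{k-1})\bigr)$.

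There are then three kinds of terms. For the monotone part, $\langle F(z^{k+1}), z^{k+1}-z^*\rangle \ge \mu\|z^{k+1}-z^*\|^2 + g(z^*) - g(z^{k+1})$ by Assumption~\ref{as:strmon} and the defining inequality of \eqref{eq:VI}; this cancels the $g$-terms and supplies the factor $(1+2\mu\eta)$ in front of $\|z^{k+1}-z^*\|^2$ in $\Psi^{k+1}$. The term $2\eta\langle F(z^k)-F(z^{k+1}), z^{k+1}-z^*\rangle$ is left intact — it is precisely the optimism term of $\Psi^{k+1}$. For the noise term, I introduce the frozen iterate $\hat z^{k+1} = \prox_{\eta g}((1-\gamma)z^k + \gamma w^k - \eta\,\Ek{\Delta^k})$; being measurable, it satisfies $\Ek{\langle\Delta^k - \Ek{\Delta^k}, \hat z^{k+1}-z^*\rangle} = 0$, so the noise term reduces to $\eta\,\Ek{\langle\Delta^k - \Ek{\Delta^k}, z^{k+1}-\hat z^{k+1}\rangle}$, which by Cauchy–Schwarz, nonexpansiveness of the prox ($\|z^{k+1} - \hat z^{k+1}\| \le \eta\|\Delta^k - \Ek{\Delta^k}\|$) and Lemma~\ref{var_lem_fix} is bounded in absolute value by $2\eta^2\delta^2(\|z^k - w^{k-1}\|^2 + \alpha^2\|z^k - z^{k-1}\|^2)$; the cap $\eta \le \tfrac{\sqrt{\alpha\gamma}}{2\delta}$ makes this at most $\tfrac12\alpha\gamma(\|z^k-w^{k-1}\|^2 + \alpha^2\|z^k-z^{k-1}\|^2)$, absorbed into the $\gamma\|w^{k-1}-z^k\|^2$ and $\tfrac18\|z^k - z^{k-1}\|^2$ slots of $\Psi^k$. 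Finally, the past-momentum term $2\alpha\eta\langle F(z^k)-F(z^{k-1}), z^{k+1}-z^*\rangle$ is split along $z^{k+1}-z^* = (z^{k+1}-z^k) + (z^k - z^*)$: the first piece is handled with Cauchy–Schwarz, Assumption~\ref{as:Lipsh} and Young, using the gap between $\tfrac{1-\gamma}{2}\|z^k - z^{k+1}\|^2$ and the $\tfrac18\|z^{k+1}-z^k\|^2$ required for $\Psi^{k+1}$, the $\tfrac18\|z^k-z^{k-1}\|^2$ slot of $\Psi^k$ and the cap $\eta \le \tfrac1{8(L+\delta)}$; the second piece is $-\alpha$ times the optimism term of $\Psi^k$ and telescopes once one accepts the rate factor $q \ge \alpha$. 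To close, I use that $w^{k+1}$ equals $z^k$ with probability $\gamma$ and $w^k$ otherwise, so $\Ek{\|w^{k+1}-z^*\|^2} = \gamma\|z^k-z^*\|^2 + (1-\gamma)\|w^k-z^*\|^2$; multiplying by $\tfrac{\gamma+\eta\mu}{\gamma}$ and comparing the $\|z^k-z^*\|^2$ and $\|w^k-z^*\|^2$ coefficients against the right-hand side and against $q\Psi^k$ is what produces the two geometric rates $1-\tfrac{\mu\eta}{2}$ and $1-(\tfrac1{\eta\mu}+\tfrac1\gamma)^{-1}$.

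The main obstacle is the simultaneous bookkeeping in the last step: every cross term — products of operator increments with iterate increments, the $w\!\to\! z$ conversion coming from the probabilistic update, and the noise remainder — must be absorbed into a fixed set of quadratic slots of $\Psi^k$ (namely $(1+2\mu\eta)\|z^k-z^*\|^2$, $\tfrac{\gamma+\eta\mu}{\gamma}\|w^k-z^*\|^2$, $\gamma\|w^{k-1}-z^k\|^2$, $\tfrac18\|z^k-z^{k-1}\|^2$ and the sign-indefinite optimism term) with coefficients that leave each remainder nonnegative, and this has to happen with the \emph{same} rate $q$. Making the two step-size caps and all the Young parameters line up — in particular so that the indefinite term $2\eta\langle F(z^{k-1})-F(z^k),z^k-z^*\rangle$, which must stay inside $\Ek{\cdot}$ because $z^{k+1}$ is random, never breaks the contraction — is the delicate part; everything else is the routine application of the prox inequality, strong monotonicity and Lemma~\ref{var_lem_fix}.
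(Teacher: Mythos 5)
Your proposal is correct and follows essentially the same route as the paper's proof: a one-step contraction of the identical Lyapunov function $\Psi^k$, obtained from the prox optimality condition, the decomposition $\Delta^k=\Ek{\Delta^k}+(\Delta^k-\Ek{\Delta^k})$ with the variance bound of Lemma~\ref{var_lem_fix}, strong monotonicity, the telescoping optimism/momentum terms, the probabilistic $w^{k+1}$ update, and finally the bound $\Psi^k\geq\tfrac12\sqn{z^k-z^*}$. The only (harmless) deviation is cosmetic: where the paper pairs the zero-mean noise with the measurable $z^k-z^*$ and absorbs $\langle\Delta^k-\Ek{\Delta^k},z^{k+1}-z^k\rangle$ via Young's inequality, you instead introduce a frozen iterate $\hat z^{k+1}$ and use nonexpansiveness of the prox — both give the same $O(\eta^2\delta^2)$ remainder and close the argument.
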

\begin{proof} 
We start from line \ref{alg1_q_zk1} of Algorithm~\ref{alg:optmasha}
\begin{align*}
		\sqn{z^{k+1} - z^*}
		=&
		\sqn{z^k - z^*} + 2\<z^{k+1} - z^k,z^{k+1} - z^*> - \sqn{z^{k+1} - z^k}
		\\=&
		\sqn{z^k - z^*} + 2\gamma \<w^k - z^k, z^{k+1} - z^*> 
		\\&- 2\eta \<\Delta^k - F(z^*),z^{k+1} - z^*>  - \sqn{z^{k+1} - z^k} 
		\\& -
		2\<z^k + \gamma (w^k - z^k) - \Delta^k - z^{k+1} + \eta F(z^*),z^{k+1} - z^*>.
\end{align*}
Optimality condition for \eqref{eq:VI} it follows, that
	\begin{equation*}
		-F(z^*) \in \partial g(z^*).
	\end{equation*}
From update (line \ref{alg1_q_zk1}) for $z^{k+1}$ of Algorithm~\ref{alg:optmasha} it follows, that
	\begin{equation*}
		z^k + \gamma(w^k - z^k) - \eta \Delta^k - z^{k+1} \in \partial (\eta g)(z^{k+1}).
	\end{equation*}
Hence, from  monotonicity of $\partial g(\cdot)$ we get
	\begin{align*}
	\E{\sqn{z^{k+1} - z^*}}
	&\leq
	\E{\sqn{z^k - z^*}} + 2\gamma \E{\<w^k - z^k, z^{k+1} - z^*>}
	\\&\quad -2\eta\E{\<\Delta^k - F(z^*), z^{k+1} - z^*>}  - \E{\sqn{z^{k+1} - z^k}}
	\\&= 
	\E{\sqn{z^k - z^*}} + 2\gamma \E{\<w^k - z^*, z^{k+1} - z^*>}
	\\&\quad - 2\gamma\E{\<z^k - z^*, z^{k+1} - z^*>}
	\\&\quad
	-2\eta\E{\<\Delta^k - F(z^*), z^{k+1} - z^*>} 
	 - \E{\sqn{z^{k+1} - z^k}}
	\\&=
	\E{\sqn{z^k - z^*}}
	\\&\quad
	+ \gamma \E{\sqn{w^k - z^*} + \sqn{z^{k+1} - z^*} - \sqn{z^{k+1} - w^k}} 
	\\&\quad- \gamma\E{\sqn{z^{k+1} - z^*} + \sqn{z^k - z^*} - \sqn{z^{k+1} - z^k}}
	\\&\quad-2\eta\E{\<\Delta^k - F(z^*), z^{k+1} - z^*>}  - \E{\sqn{z^{k+1} - z^k}}
	\\&=
	\E{\sqn{z^k - z^*}} + \gamma\E{\sqn{w^k - z^*}} - \gamma\E{\sqn{z^k - z^*}} 
	\\&\quad
	- \gamma\E{\sqn{w^k - z^{k+1}}}
	-
	2\eta\E{\<\Delta^k - F(z^*), z^{k+1} - z^*>}
	\\&\quad
	 - (1-\gamma)\E{\sqn{z^{k+1} - z^k}}.
	\end{align*}
In previous we also use the simple fact $\| a + b\|^2 = \|a\|^2 + 2\langle a; b \rangle + \|b\|^2$ twice. Small rearrangement gives
	\begin{align*}
	\E{\sqn{\z^{k+1} - \z^*}}
	&\leq
	\E{\sqn{\z^k - \z^*}} + \gamma\E{\sqn{\w^k - \z^*}} - \gamma\E{\sqn{\z^k - \z^*}} 
	\\&\quad
	- \gamma\E{\sqn{\w^k - \z^{k+1}}}  - (1-\gamma)\E{\sqn{\z^{k+1} - \z^k}} 
	\\&\quad
	- 2\eta\E{\<\Ek{\Delta^k}  - F(\z^*), \z^{k+1} - \z^*>}
	\\&\quad
	- 2\eta\E{\<\Delta^k - \Ek{\Delta^k}, \z^{k+1} - \z^k>}
	\\&\quad
	- 2\eta\E{\<\Delta^k - \Ek{\Delta^k}, \z^{k} - \z^*>}.
	\end{align*}
Using the tower property of expectation we can obtain the following:
	\begin{align*}
		\E{\<\Ek{\Delta^k} - \Delta^k, \z^k - \z^*>} &= \E{\Ek{\<\Ek{\Delta^k} - \Delta^k, \z^k - \z^*>}}
		\\&=
		\E{{\<\Ek{\Ek{\Delta^k} - \Delta^k}, \z^k - \z^*>}}
		\\&=
		\E{{\<\Ek{\Delta^k} - \Ek{\Delta^k}, \z^k - \z^*>}} = 0.
	\end{align*}
Hence, with \eqref{vrvi:eq:3}
	\begin{align*}
	\E{\sqn{\z^{k+1} - \z^*}}
	&\leq
	\E{\sqn{\z^k - \z^*}} + \gamma\E{\sqn{\z^k - \z*}} - \gamma\E{\sqn{\z^k - \z^*}}  
	\\&\quad
	- \gamma\E{\sqn{\w^k - \z^{k+1}}} - (1-\gamma)\E{\sqn{\z^{k+1} - \z^k}} 
	\\&\quad - 2\eta\E{\<\F(\z^k) + \alpha(\F(\z^k) - \F(\z^{k-1})) - \F(\z^*), \z^{k+1} - \z^*>}
	\\&\quad+ 2\eta\E{\<\Ek{\Delta^k} - \Delta^k , \z^{k+1} - \z^k>}.
	\end{align*}
Using the Young's inequality we get
	\begin{align*}
	\E{\sqn{\z^{k+1} - \z^*}}
	&\leq
	\E{\sqn{\z^k - \z^*}} + \gamma\E{\sqn{\w^k - \z^*}} - \gamma\E{\sqn{\z^k - \z^*}} 
	\\& \quad
	- \gamma\E{\sqn{\w^k - \z^{k+1}}} - (1-\gamma)\E{\sqn{\z^{k+1} - \z^k}} 
	\\& \quad - 2\eta\E{\<\F(\z^k) + \alpha(\F(\z^k) - \F(\z^{k-1})) - \F(\z^*), \z^{k+1} - \z^*>}
	\\& \quad+ 2\eta^2\E{\sqn{\Ek{\Delta^k} - \Delta^k}}  + \frac{1}{2}\E{\sqn{\z^{k+1} - \z^k}}
    \\&= \E{\sqn{\z^k - \z^*}} + \gamma\E{\sqn{\w^k - \z^*}} - \gamma\E{\sqn{\z^k - \z^*}}  
	\\& \quad
	- \gamma\E{\sqn{\w^k - \z^{k+1}}} - (1/2-\gamma)\E{\sqn{\z^{k+1} - \z^k}} 
	\\&\quad
	+ 2\eta^2\E{\sqn{\Ek{\Delta^k} - \Delta^k}}
	\\&\quad - 2\eta\E{\<\F(\z^k) + \alpha(\F(\z^k) - \F(\z^{k-1})) - \F(\z^*), \z^{k+1} - \z^*>}.
	\end{align*}
By \eqref{vrvi:eq:1} we get
	\begin{align*}
	\E{\sqn{\z^{k+1} - \z^*}}
	&\leq
	\E{\sqn{\z^k - \z^*}} + \gamma\E{\sqn{\w^k - \z^*}} - \gamma\E{\sqn{\z^k - \z^*}}  
	\\&\quad
	- \gamma\E{\sqn{\w^k - \z^{k+1}}}
	- (1/2-\gamma)\E{\sqn{\z^{k+1} - \z^k}}  
	\\&\quad + 4\delta^2 \eta^2\E{\sqn{\z^{k} - \w^{k-1}}} + 4\alpha^2 \delta^2\eta^2 \E{\sqn{\z^k- \z^{k-1}}}
	\\&\quad - 2\eta\E{\<\F(\z^k) + \alpha(\F(\z^k) - \F(\z^{k-1})) - \F(\z^*), \z^{k+1} - \z^*>}
	\\&=\E{\sqn{\z^k - \z^*}} - 2\eta\E{\<\F(\z^{k+1}) - \F(\z^*), \z^{k+1} - \z^*>} 
	\\& \quad
	+ \gamma\E{\sqn{\w^k - \z^*}} - \gamma\E{\sqn{\z^k - \z^*}} - \gamma\E{\sqn{\w^k - \z^{k+1}}}
	\\& \quad
	- (1/2-\gamma)\E{\sqn{\z^{k+1} - \z^k}}  
	\\& \quad  + 4\delta^2 \eta^2\E{\sqn{\z^{k} - \w^{k-1}}} + 4\alpha^2 \delta^2 \eta^2 \E{\sqn{\z^k- \z^{k-1}}} 
	\\& \quad - 2\eta\E{\<\F(\z^k) - \F(\z^{k+1}) + \alpha(\F(\z^k) - \F(\z^{k-1})), \z^{k+1} - \z^*>}.
	\end{align*}
Assumption \ref{as:strmon} about $\mu$-strong monotonicity of $F$ gives
	\begin{align*}
	&\E{\sqn{\z^{k+1} - \z^*}} 
	\\&\leq
	\E{\sqn{\z^k - \z^*}} - 2\mu\eta\E{\sqn{\z^{k+1} - \z^*}} + \gamma\E{\sqn{\w^k - \z^*}} - \gamma\E{\sqn{\z^k - \z^*}} 
	\\&\quad
	- \gamma\E{\sqn{\w^k - \z^{k+1}}}  - (1/2-\gamma)\E{\sqn{\z^{k+1} - \z^k}}
	\\&\quad
	+ 4\delta^2 \eta^2\E{\sqn{\z^{k} - \w^{k-1}}} + 4\alpha^2 \delta^2 \eta^2 \E{\sqn{\z^k- \z^{k-1}}}
	\\&\quad - 2\eta \E{\<\F(\z^k) - \F(\z^{k+1}) + \alpha(\F(\z^k) - \F(\z^{k-1})), \z^{k+1} - \z^*>}
	\\&=\E{\sqn{\z^k - \z^*}} - 2\mu\eta\E{\sqn{\z^{k+1} - \z^*}} + \gamma\E{\sqn{\w^k - \z^*}} - \gamma\E{\sqn{\z^k - \z^*}} 
	\\& \quad
	- \gamma\E{\sqn{\w^k - \z^{k+1}}}  - (1/2-\gamma)\E{\sqn{\z^{k+1} - \z^k}} 
	\\&\quad
	+ 4\delta^2 \eta^2 \E{\sqn{\z^{k} - \w^{k-1}}} + 4\alpha^2 \delta^2 \eta^2 \E{\sqn{\z^k- \z^{k-1}}}
	\\&\quad -2\eta\E{\<\F(\z^k) - \F(\z^{k+1}), \z^{k+1} - \z^*>} 
	\\&\quad -2\alpha\eta\E{\<\F(\z^k) - \F(\z^{k-1}), \z^{k} - \z^*>}
	\\&\quad -2\alpha\eta\E{\<\F(\z^k) - \F(\z^{k-1}), \z^{k+1} - \z^k>}.
	\end{align*}
With small rearrangement and Young's inequality we get
	\begin{align*}
	&\E{\sqn{\z^{k+1} - \z^*}} + 2\eta\E{\<\F(\z^k) - \F(\z^{k+1}), \z^{k+1} - \z^*>}
	\\&\leq
	\E{\sqn{\z^k - \z^*}} - 2\mu\eta\E{\sqn{\z^{k+1} - \z^*}} + \gamma\E{\sqn{\w^k - \z^*}} 
	\\&\quad 
	- \gamma\E{\sqn{\z^k - \z^*}} +\alpha \cdot 2\eta\E{\<\F(\z^{k-1}) - \F(\z^{k}), \z^{k} - \z^*>}
	\\& \quad
	- \gamma\E{\sqn{\w^k - \z^{k+1}}}  - (1/2-\gamma)\E{\sqn{\z^{k+1} - \z^k}} 
	\\&\quad
	+ 4\delta^2 \eta^2 \E{\sqn{\z^{k} - \w^{k-1}}} + 4\alpha^2 \delta^2 \eta^2 \E{\sqn{\z^k- \z^{k-1}}}
	\\&\quad 
	+ 4\alpha^2 \eta^2 \E{\sqn{\F(\z^k) - \F(\z^{k-1})}} +\frac{1}{4}\E{\sqn{\z^{k+1} - \z^k}}.
	\end{align*}
With $L$-Lipshitzness of $\F$ (Assumption \ref{as:Lipsh}) and $\gamma \leq \frac{1}{8}$, we have
	\begin{align*}
	&\E{\sqn{\z^{k+1} - \z^*}} + 2\eta\E{\<\F(\z^k) - \F(\z^{k+1}), \z^{k+1} - \z^*>}
	\\&\leq
	\E{\sqn{\z^k - \z^*}} - 2\mu\eta\E{\sqn{\z^{k+1} - \z^*}} + \gamma\E{\sqn{\w^k - \z^*}} 
	\\&\quad 
	- \gamma\E{\sqn{\z^k - \z^*}} +\alpha \cdot 2\eta\E{\<\F(\z^{k-1}) - \F(\z^{k}), \z^{k} - \z^*>}
	\\& \quad
	- \gamma\E{\sqn{\w^k - \z^{k+1}}} + 4\delta^2 \eta^2 \E{\sqn{\z^{k} - \w^{k-1}}} - \frac{1}{8}\E{\sqn{\z^{k+1}- \z^{k}}}
	\\&\quad
	 + 4\alpha^2 \delta^2 \eta^2 \E{\sqn{\z^k- \z^{k-1}}}
	+ 4\alpha^2 L^2 \eta^2  \E{\sqn{\z^k- \z^{k-1}}}.
	\end{align*}
Now, we add $\tfrac{\gamma +\eta\mu}{\gamma}\E{\sqn{\w^{k+1} - \z^*}}$ to both sides and use update for $w^{k+1}$ (lines \ref{alg1:wk_zk} and \ref{alg1:wk_wk} of Algorithm \ref{alg:optmasha})
	\begin{align*}
	\frac{\gamma + \eta\mu}{\gamma}\E{\mathbb{E}_{w^{k+1}}{\sqn{\w^{k+1} - \z^*}}} &= (\gamma + \eta\mu)\E{\sqn{\z^{k} - \z^*}}
	\\&\quad
	+ \frac{(\gamma + \eta\mu)(1-\gamma)}{\gamma}\E{\sqn{\w^{k} - \z^*}},
	\end{align*}
and get 
	\begin{align*}
	(1+ 2 \mu \eta)&\E{\sqn{\z^{k+1} - \z^*}} + \frac{\gamma + \eta\mu}{\gamma}\E{\sqn{\w^{k+1} - \z^*}} 
	\\&\quad
	+ 2\eta\E{\<\F(\z^k) - \F(\z^{k+1}), \z^{k+1} - \z^*>}
	\\&\quad
	+ \gamma\E{\sqn{\w^k - \z^{k+1}}} +\frac{1}{8} \E{\sqn{\z^{k+1}- \z^{k}}}
	\\&\leq
	(1+\eta\mu)\E{\sqn{\z^k - \z^*}} + \gamma\E{\sqn{\w^k - \z^*}} 
	\\&\quad
	+ \frac{(\gamma + \eta\mu)(1-\gamma)}{\gamma}\E{\sqn{\w^{k} - \z^*}}
	\\&\quad	
	+\alpha \cdot 2\eta\E{\<\F(\z^{k-1}) - \F(\z^{k}), \z^{k} - \z^*>}
	\\& \quad
	 + \frac{4\delta^2 \eta^2}{\gamma} \cdot \gamma \E{\sqn{\w^{k-1} - \z^{k}}}
	\\&\quad
	+ 32\alpha^2 (\delta^2 + L^2) \eta^2 \cdot \frac{1}{8} \E{\sqn{\z^k- \z^{k-1}}}.
	\end{align*}
Note that $\eta \leq \min\left\{\frac{\sqrt{\alpha\gamma}}{2 \delta}, \frac{1}{8(L + \delta)}\right\}$, $0 <\alpha< 1$, $ 0 <\gamma < 1$ and $L \geq \mu$, then we get that
$$
\frac{4\delta^2 \eta^2}{\gamma} \leq \alpha; \quad 32\alpha^2 (\delta^2 + L^2) \eta^2  \leq \frac{1}{2}; \quad (1 + \mu \eta) \leq \left(1- \frac{\mu \eta}{2}\right) (1 + 2 \mu \eta);
$$
$$
\gamma + \frac{(\gamma + \eta\mu)(1-\gamma)}{\gamma} = \left(1- \frac{1}{\frac{1}{\eta\mu} + \frac{1}{\gamma}}\right) \frac{\gamma + \eta\mu}{\gamma}
$$
Hence, it holds
	\begin{align*}
	(1+ 2 \mu \eta)&\E{\sqn{\z^{k+1} - \z^*}} + \frac{\gamma + \eta\mu}{\gamma}\E{\sqn{\w^{k+1} - \z^*}} 
	\\&\quad
	+ 2\eta\E{\<\F(\z^k) - \F(\z^{k+1}), \z^{k+1} - \z^*>}
	\\&\quad
	+ \gamma\E{\sqn{\w^k - \z^{k+1}}} +\frac{1}{8} \E{\sqn{\z^{k+1}- \z^{k}}}
	\\&\leq
	\left(1- \frac{\mu \eta}{2}\right) \cdot (1+2\eta\mu)\E{\sqn{\z^k - \z^*}} 
	\\&\quad
	+ \left(1- \frac{1}{\frac{1}{\eta\mu} + \frac{1}{\gamma}}\right) \cdot \frac{\gamma + \eta\mu}{\gamma}\E{\sqn{\w^{k} - \z^*}}
	\\&\quad	
	+\alpha \cdot 2\eta\E{\<\F(\z^{k-1}) - \F(\z^{k}), \z^{k} - \z^*>}
	\\& \quad
	 + \alpha \cdot \gamma \E{\sqn{\w^{k-1} - \z^{k}}}
	\\&\quad
	+ \frac{1}{2} \cdot \frac{1}{8} \E{\sqn{\z^k- \z^{k-1}}}.
	\end{align*}
Definition \eqref{eq:Lf_fixed} of the Lyapunov function move us to
    \begin{align*}
	\E{\Psi^{k+1}} \leq
	\max\left[ \left(1- \frac{\mu \eta}{2}\right); \left(1- \frac{1}{\frac{1}{\eta\mu} + \frac{1}{\gamma}}\right) ; \alpha ; \frac{1}{2} \right] \cdot \E{\Psi^k}.
	\end{align*}
	It remains to show, that $$\Psi^k \geq \frac{1}{2}\sqn{\z^{k} - \z^*}.$$
	\begin{align*}
		\Psi^k&\geq \sqn{\z^{k} - \z^*}
		+
		\frac{1}{8}\sqn{\z^{k} - \z^{k-1}}
		+
		2\eta \<\F(\z^{k}) -\F(\z^{k-1}) ,\z^*-\z^k>
		\\&\geq 
		\sqn{\z^{k} - \z^*}
		+
		\frac{1}{8}\sqn{\z^{k} - \z^{k-1}}
		-
		\frac{1}{2}\sqn{\z^k - \z^*}
		-
		2\eta^2\sqn{\F(\z^k) - \F(\z^{k-1})}.
	\end{align*}
	Using $L$-Lipschitzness of $\F$ we get 
	\begin{align*}
	\Psi^k&\geq \frac{1}{2}\sqn{\z^k - \z^*} + \frac{1}{8} \left( 1 - 16 L^2 \eta^2 \right)\sqn{\z^{k} - \z^{k-1}}.
	\end{align*}
	With $\eta \leq \frac{1}{8 L}$, we get $$\Psi^k \geq \frac{1}{2}\sqn{\z^{k} - \z^*}.$$
\end{proof}

\begin{theorem}[Theorem \ref{th:ALg1_conv}]\label{th:app_fixed}
	Consider the problem \eqref{eq:VI} under Assumptions~\ref{as:Lipsh}, \ref{as:strmon} and \ref{as:delta}. Let  $\{\z^k\}$ be the sequence generated by Algorithm~\ref{alg:optmasha} with compressors from Definition \ref{def:PermK} and parameters 
	\begin{align*}
		0 < \gamma  \leq \frac{1}{8}, \quad \alpha = \frac{1}{2}, \quad \eta = \min\left\{\frac{\sqrt{\alpha\gamma}}{2 \delta}, \frac{1}{8(L + \delta)}\right\}.
	\end{align*}
	Then, given $\varepsilon>0$, the number of iterations for 
$\|\z^k - \z^*\|^2 \leq \varepsilon$ is 
\begin{equation*}
    O\left( \left[\frac{1}{\gamma} +\frac{L}{\mu} + \frac{\delta}{ \sqrt{\gamma}\mu}\right] \log \frac{1}{\varepsilon} \right).
\end{equation*}
\end{theorem}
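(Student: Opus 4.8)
The plan is to deduce the theorem directly from Lemma~\ref{lem:fixed_key}, which does all the heavy lifting. First I would check that the parameters prescribed in the theorem satisfy the hypotheses of that lemma: indeed $0<\gamma\le\tfrac18$, $\alpha=\tfrac12\in(0,1)$, and $\eta$ is exactly $\min\{\sqrt{\alpha\gamma}/(2\delta),\,1/(8(L+\delta))\}$, so Lemma~\ref{lem:fixed_key} applies verbatim. It gives the one-step contraction $\E{\Psi^{k+1}}\le\rho\,\E{\Psi^k}$ with
\[
\rho=\max\left[\,1-\tfrac{\mu\eta}{2},\;\;1-\tfrac{1}{1/(\eta\mu)+1/\gamma},\;\;\alpha,\;\;\tfrac12\,\right]\in(0,1),
\]
together with the lower bound $\Psi^k\ge\tfrac12\sqn{\z^k-\z^*}$. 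Unrolling the recursion yields $\E{\sqn{\z^k-\z^*}}\le 2\rho^k\Psi^0$, so it suffices to choose $k$ so that $2\rho^k\Psi^0\le\varepsilon$, i.e. $k\ge\frac{1}{\log(1/\rho)}\log\frac{2\Psi^0}{\varepsilon}$.

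Next I would convert the rate $\rho$ into the claimed complexity. Using $\log(1/\rho)\ge 1-\rho$ for $\rho\in(0,1)$ and $\alpha=\tfrac12$, I get $\frac{1}{\log(1/\rho)}\le\frac{1}{1-\rho}=\max\left[\tfrac{2}{\mu\eta},\,\tfrac{1}{\eta\mu}+\tfrac1\gamma,\,2\right]=O\!\left(\tfrac{1}{\mu\eta}+\tfrac1\gamma\right)$. It remains to substitute the stepsize: $\tfrac1\eta=\max\left\{\tfrac{2\delta}{\sqrt{\alpha\gamma}},\,8(L+\delta)\right\}=O\!\left(\tfrac{\delta}{\sqrt\gamma}+L\right)$, where I use $\alpha=\tfrac12$ and $\delta\le\delta/\sqrt\gamma$ (since $\gamma<1$) to fold the stand-alone $\delta$ into the $\delta/\sqrt\gamma$ term. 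Hence $\tfrac{1}{\mu\eta}=O\!\left(\tfrac{L}{\mu}+\tfrac{\delta}{\sqrt\gamma\,\mu}\right)$, and plugging back gives an iteration count of $O\!\left(\left[\tfrac1\gamma+\tfrac{L}\mu+\tfrac{\delta}{\sqrt\gamma\,\mu}\right]\log\tfrac{2\Psi^0}{\varepsilon}\right)$. Treating the initial Lyapunov value $\Psi^0$ (a quantity depending only on $\sqn{\z^0-\z^*}$, $\gamma$, $\eta$) as independent of $\varepsilon$, the term $\log(2\Psi^0)$ is absorbed into the $\log\tfrac1\varepsilon$ factor, which yields the stated complexity.

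I do not expect any genuine obstacle: everything substantive is already contained in Lemma~\ref{lem:fixed_key}, and what remains is a routine unrolling of a linear rate. The points needing a little care are (i) checking the admissibility of the parameters, as done above; (ii) the elementary estimate $\log(1/\rho)\ge 1-\rho$; and (iii) the bookkeeping that folds the lonely $\delta$ from $1/\eta$ into the $\delta/\sqrt\gamma$ term and confirms that no term is dropped upon passing to $O(\cdot)$ — here one uses $\gamma<1$ together with $L\ge\mu$ (the latter being automatic from Assumptions~\ref{as:Lipsh} and~\ref{as:strmon}). If one wanted the cleanest exposition, it suffices to write ``iterating the estimate of Lemma~\ref{lem:fixed_key} and using $\Psi^k\ge\tfrac12\sqn{\z^k-\z^*}$,'' and then append the two-line stepsize computation above.
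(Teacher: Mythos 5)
Your proposal is correct and follows exactly the paper's own route: invoke Lemma~\ref{lem:fixed_key}, unroll the linear contraction using $\Psi^k \geq \tfrac{1}{2}\sqn{\z^k - \z^*}$, and substitute $\tfrac{1}{\eta} = O\left(L + \tfrac{\delta}{\sqrt{\gamma}}\right)$ to obtain the stated complexity. Your write-up is in fact more explicit than the paper's one-line derivation, spelling out the parameter check, the $\log(1/\rho) \geq 1-\rho$ step, and the absorption of the stand-alone $\delta/\mu$ term into $\delta/(\sqrt{\gamma}\mu)$.
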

\begin{proof}
From Lemma \ref{lem:fixed_key} we can get that the iteration complexity of Algorithm~\ref{alg:optmasha}:
\begin{align*}
O\left( \left[1 + \frac{1}{\eta \mu}+\frac{1}{\gamma}\right] \log \frac{1}{\varepsilon} \right) &= O\left( \left[\frac{1}{\gamma} + \frac{\delta}{\mu} + \frac{L}{\mu} + \frac{\delta}{ \sqrt{\gamma}\mu}\right] \log \frac{1}{\varepsilon} \right) \\
&= O\left( \left[\frac{1}{\gamma} +\frac{L}{\mu} + \frac{\delta}{ \sqrt{\gamma}\mu}\right] \log \frac{1}{\varepsilon} \right).
\end{align*}
\end{proof}

\end{document}